\documentclass[11pt, reqno]{amsart}
\usepackage{enumerate}
\usepackage{amssymb,commath}
\usepackage{stix}
\usepackage{amsthm}
\usepackage{thmtools}
\usepackage{
	nameref,
}
\usepackage[colorlinks,linkcolor=black,anchorcolor=blue,citecolor=green]{hyperref}
\declaretheorem[]{theorem}
\declaretheorem[]{lemma}
\declaretheorem[]{corollary}
\declaretheorem[]{proposition}
\declaretheorem[]{claim}

\declaretheorem[numbered=no, name=Theorem A]{ThmA}
\declaretheorem[style=remark]{remark}

\allowdisplaybreaks


\newcommand{\injrad}{\operatorname{inj.rad}}
\newcommand{\conjrad}{\operatorname{conj.rad}}
\newcommand{\Ric}{\operatorname{Ric}}

\newcommand{\sndf}{{I\!I}}

\begin{document}

\title[Parametrized Compactness under Bounded Ricci Curvature] {A Parametrized Compactness Theorem under Bounded Ricci Curvature}


\author{Xiang Li}
\address{School of Mathematical Sciences, Capital Normal Universiy, Beijing China}
\curraddr{}
\email{lixiang@cciee.cn}

\author{Shicheng Xu}
\address{School of Mathematical Sciences, Capital Normal Universiy, Beijing China}
\curraddr{}
\email{shichengxu@gmail.com}
\thanks{This work was supported in part by the National
	Natural Science Foundation of China (Grant No.~11401398) and by Youth Innovative Research Team of Capital Normal University.
}

\subjclass[2010]{Primary 53C20, 53C23}

\date{\today}

\begin{abstract} We prove a parametrized compactness theorem on manifolds of bounded Ricci curvature, upper bounded diameter and lower bounded injectivity radius.
\end{abstract}

\maketitle

\section{Introduction}

The goal of this paper is to study the parametrized compactness of non-collapsed manifolds of bounded Ricci curvature. 
Let us first recall the Cheeger-Gromov's convergence and compactness theorem (\cite{Cheegerphdthesis,Cheeger1970Finiteness,GLP1981}, cf. \cite{GW1988,Peters1987,Kasue1989}), which says that the set $\mathcal{M}^n_{Rm}(\lambda,v,D)$ of Riemannian $n$-manfolds whose sectional curvature is bounded by $\lambda$, volume is bounded from below by $v>0$ and diameter is bounded from above by $D$,
is precompact in the $C^{1,\alpha}$-topology. Later the $C^{1,\alpha}$-compactness was generalized by Anderson \cite{Anderson1990} to manifolds of two-sided bounded Ricci curvature and lower bounded injectivity radius. Let $\mathcal{M}^n_{Ric}(\lambda,\rho,D)$ be the set, endowed with the Gromov-Hausdorff distance $d_{GH}$, that consists of Riemannian $n$-manifolds whose Ricci curvature is bounded by $\lambda$, injectivity radius is bounded below by $\rho>0$, and diameter is bounded above by $D$.
\begin{theorem}[Anderson \cite{Anderson1990}]\label{thm-anderson-compactness}
	$\mathcal{M}^n_{Ric}(\lambda,\rho,D)$ is precompact in the $C^{1,\alpha}$-topology, in the sense that any sequence $(M_i,g_i)\in \mathcal{M}^n_{Ric}(\lambda,\rho,D)$ admits a subsequence $(M_{i_1},g_{i_1})$ whose $d_{GH}$-limit is isometric to a $C^{1,\alpha}$-Riemannian manifold $(M,g)$, and there are diffeomorphisms $f_{i_1}:M\to M_{i_1}$ for all sufficient large $i_1$ such that the pullback metric $f_{i_1}^*g_{i_1}$ converges to $g$ in the $C^{1,\alpha}$ topology.
\end{theorem}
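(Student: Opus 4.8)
The plan is to follow Anderson's strategy: reduce the statement to a uniform lower bound for the $C^{1,\alpha}$-harmonic radius, and then glue harmonic coordinate charts. Recall that the $C^{1,\alpha}$-harmonic radius $r_h(p)$ at a point $p$ of a Riemannian manifold is the supremum of the $r>0$ for which there is a harmonic chart $x=(x^1,\dots,x^n)$ on the ball $B(p,r)$ — that is, with $\Delta_g x^k=0$ — in which $\tfrac12\delta_{ij}\le g_{ij}\le 2\delta_{ij}$ and $r^{1+\alpha}[\partial g_{ij}]_{\alpha}\le 1$. The first step is the key a priori estimate: there is $r_0=r_0(n,\lambda,\rho,\alpha)>0$ such that $r_h(p)\ge r_0$ for every $p\in M$ and every $(M,g)\in\mathcal M^n_{Ric}(\lambda,\rho,D)$.

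To prove this I would construct harmonic coordinates by the Jost--Karcher method: on a geodesic ball of radius comparable to $\rho$ solve the Dirichlet problems $\Delta_g u^k=0$ with suitable almost linear boundary data, and use the Bochner identity $\Delta|\nabla u|^2=2|\nabla^2 u|^2+2\Ric(\nabla u,\nabla u)$ together with interior elliptic estimates to show that $u=(u^1,\dots,u^n)$ is a chart in which $g$ is $C^1$-controlled; the injectivity radius bound enters here through the non-collapsing $\operatorname{vol} B(q,s)\ge c(n)\,s^n$ for $s\le\rho$, on which the relevant Sobolev and elliptic constants depend. In such a chart the Ricci tensor has the DeTurck--Kazdan form
\[
g^{kl}\partial_k\partial_l g_{ij}=-2\Ric_{ij}+Q_{ij}(g,\partial g),
\]
with $Q$ quadratic in $\partial g$ and rational in $g$. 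Since the right-hand side is then bounded in $L^\infty$ (the metric is already $C^1$-controlled and $|\Ric|\le\lambda$), interior $L^p$-estimates give uniform bounds on $\|g_{ij}\|_{W^{2,p}}$, hence on $\|g_{ij}\|_{C^{1,\alpha}}$ for every $\alpha\in(0,1)$, on a ball of radius depending only on $n,\lambda,\rho$.

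Granting this estimate, I would finish by standard patching. A subsequence of $(M_i,g_i)$ converges in the Gromov--Hausdorff sense to a compact metric space $X$ (Gromov precompactness from $\Ric\ge-\lambda$ and $\diam\le D$); Bishop--Gromov bounds $\operatorname{vol}(M_i)$ from above, so each $M_i$ is covered by at most $N=N(n,\lambda,\rho,D)$ harmonic charts of radius $r_0$ centred along an $(r_0/10)$-net, whose transition maps are $g$-harmonic with bounded $C^0$-norm and hence, by Schauder, uniformly bounded in $C^{2,\alpha}$. Passing to a further subsequence, the number of charts can be taken constant, the centres convergent in $X$, and in each chart the components $(g_i)_{ab}$ and the transition maps convergent in $C^{1,\alpha'}$ for every $\alpha'<\alpha$ (Arzel\`a--Ascoli); the limiting charts and transitions assemble into a manifold $M$ equipped with a $C^{1,\alpha}$ metric $g$, and because harmonic charts are almost Euclidean the length space $(M,d_g)$ is isometric to $X$. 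For $i$ large the chart data on $M_i$ are $\varepsilon_i$-close in $C^{1,\alpha'}$ to those on $M$; matching charts gives local almost-identity maps, which I would glue into a global diffeomorphism $f_i\colon M\to M_i$ by a center-of-mass construction, valid since the two metrics are close and the charts may be taken geodesically convex. Then $f_i^*g_i\to g$ in $C^{1,\alpha'}$ for every $\alpha'<\alpha$, and applying the harmonic radius estimate once more with an exponent $\beta\in(\alpha,1)$ shows the $(g_i)_{ab}$ are in fact $C^{1,\beta}$-bounded, upgrading the convergence to $C^{1,\alpha}$.

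The hard part will be the a priori estimate on the harmonic radius: this is exactly the point where only Ricci (not sectional) curvature is available, so geodesic normal coordinates are of no help and one must work through harmonic coordinates and the favorable form of $\Ric$ there, with the injectivity radius bound supplying the non-collapsing needed to make the elliptic estimates uniform. Once uniform harmonic charts are in hand, the Gromov--Hausdorff extraction, the chart-by-chart convergence, and the construction of the diffeomorphisms $f_i$ are by now routine.
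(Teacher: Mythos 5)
The paper does not actually prove this statement: it is Anderson's $C^{1,\alpha}$-compactness theorem, quoted verbatim with a citation to \cite{Anderson1990} and used as a black box in Sections 2 and 4. So your proposal has to be measured against Anderson's original argument. Your overall architecture --- reduce everything to a uniform lower bound on the $C^{1,\alpha}$ harmonic radius, cover by a controlled number of harmonic charts along a net, note that transition functions are harmonic and hence $C^{2,\alpha}$-bounded by Schauder, extract chart-by-chart $C^{1,\alpha'}$ limits by Arzel\`a--Ascoli, glue almost-identity maps into diffeomorphisms $f_i$ by center of mass, and upgrade from $C^{1,\alpha'}$ to $C^{1,\alpha}$ by running the harmonic radius estimate with an exponent $\beta>\alpha$ --- is exactly the standard one and is sound.

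The gap is in your proof of the key a priori estimate on the harmonic radius. The Jost--Karcher construction you invoke starts from \emph{almost linear} boundary data whose quantitative nondegeneracy is obtained from Jacobi field and Rauch comparison estimates, and those require two-sided \emph{sectional} curvature bounds; under $|\Ric|\le\lambda$ and $\injrad\ge\rho$ alone there is no a priori coordinate system in which $g_{ij}$ is $C^0$-close to $\delta_{ij}$ with $C^1$ control, and producing one is precisely the content of the theorem. The Bochner identity together with Moser iteration (which only needs $\Ric\ge-\lambda$) does bound $\sup|\nabla u|$ and $\int|\nabla^2u|^2$ for a harmonic function in terms of its $C^0$ norm, but it does not yield the pointwise linear independence of $du^1,\dots,du^n$ nor the two-sided bound $\tfrac12\delta_{ij}\le g_{ij}\le2\delta_{ij}$ that you must have \emph{before} the DeTurck--Kazdan elliptic bootstrap can start. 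Anderson's actual proof of this step (Main Lemma 2.2 of \cite{Anderson1990}) avoids the direct construction entirely and argues by contradiction and rescaling: if $r_h(p_i)\to0$, rescale so that $r_h=1$ at the basepoints; the rescaled manifolds have $|\Ric|\to0$ and $\injrad\to\infty$ while carrying harmonic charts of radius arbitrarily close to $1$, so a $C^{1,\alpha'}$ limit exists, is weakly Ricci-flat, hence smooth and flat with infinite injectivity radius, i.e.\ isometric to $\mathbb R^n$; lower semicontinuity of the harmonic radius under this convergence forces $r_h=1$ at the limit basepoint, contradicting $r_h\equiv\infty$ on $\mathbb R^n$. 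You should replace your direct construction by this blow-up argument (or otherwise supply the missing nondegeneracy of the candidate chart); the remainder of your outline then goes through.
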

Our main result is a parameterized version of Theorem \ref{thm-anderson-compactness}. Let $f:(M,g)\to (N,h)$ be a submersion between Riemannian manifolds. The $f$-vertical (resp. horizontal) distribution $\mathcal V_f$ (resp. $\mathcal H_f$) consists of vectors tangent (resp. perpendicular) to fibers. The second fundamental form $\sndf_f$ of $f$-fibers is defined on the $f$-vertical distribution $\mathcal V_f$ by $$\sndf_f:\mathcal V_f(x)\times \mathcal V_f(x)\to \mathcal H_f(x), \quad \sndf_f(T,T)=(\nabla_TT)^\perp|_x.$$ The integrability tensor $A_f$ associated to a submersion $f$ is defined on the $f$-horizontal distribution $\mathcal H_f$ by  $$A_f:\mathcal H_f(x)\times \mathcal H_f(x)\to \mathcal V_f(x), \quad A_f(X,Y)=[X,Y]^\top|_x\in \mathcal V_f(x).$$
A submersion $f:(M,g)\to (N,h)$ is called a $\delta$-Riemannian submersion if there is $\delta\ge 0$ such that for any horizontal vector $v\in \mathcal H_f$,
\begin{align}\label{condition-almost-submersion}
e^{-\delta}|v|\le |df(v)|\le e^\delta |v|.
\end{align}

Let $\mathcal{F}^{m,n}_{Ric}(\delta,\lambda,\mu,\rho,D)$ be the set that consists of any $\delta$-Riemannian submersion $f:(M,g)\to (N,h)$ such that $(M,g)\in \mathcal{M}^m_{Ric}(\lambda,\rho,D)$, $(N,h)\in \mathcal{M}^n_{Ric}(\lambda,\rho,D)$, and the $C^0$-norm of $\sndf_f$ and $A_f$ is bounded by $\mu$, i.e.,
\begin{align}\label{condition-stability-regularity}
\max\{|\sndf_f|, |A_f|\}\le \mu.
\end{align} 
A (not necessarily continuous) map $F:(X,d_X)\to (Y,d_Y)$ is called an \emph{$\epsilon$-Gromov-Hausdorff approximation} (briefly $\epsilon$-GHA), if 
$$|d_Y(F(x),F(x'))-d_X(x,x')|<\epsilon \text{ for any $x,x'\in X$, and $f(X)$ is $\epsilon$-dense in $Y$}.$$
For any two maps in $\mathcal{F}^{m,n}_{Ric}(\delta,\lambda,\mu,\rho, D)$,  $f_i:(M_i,g_i)\to (N_i,h_i)$ ($i=1,2$),
we say that they are \emph{$\epsilon$-close} in the Gromov-Hausdorff topology if there is a pair of $\epsilon$-GHAs,
$\phi:(M_1,g_1)\to (M_2,g_2)$ and $\psi:(N_1,h_1)\to (N_2,h_2)$, such that 
$$d(\psi\circ f_1,f_2\circ\phi)=
\sup_{x\in M_1}d_{h_2}(\psi\circ f_1(x),f_2\circ \phi(x))
\le \epsilon.$$

\begin{ThmA} 
	Given $m,\lambda,\mu,\rho>0$, there are constants $\delta_0(m,\lambda,\mu,\rho)>0$ and $\epsilon_0(m,\lambda,\mu,\rho)>0$, such that for any $0\le \delta< \delta_0$ and $0<\epsilon<\epsilon_0$, if two elements $f_i:(M_i,g_i)\to (N_i,h_i)\in \mathcal{F}^{m,n}_{Ric}(\delta,\lambda,\mu,\rho,D)$ $(i=1,2)$ are $\epsilon$-close in the Gromov-Hausdorff topology, then there exits a pair of diffeomorphisms $(\Phi,\Psi)$ such that $\Phi:(M_1,g_1)\to (M_2,g_2)$ is $e^{\varkappa(\delta, \epsilon)}$-bi-Lipschitz, $\Psi:(N_1,h_1)\to (N_2,h_2)$ is $e^{\varkappa(\epsilon)}$-bi-Lipschitz, and $\Psi\circ f_1=f_2\circ \Phi$.
\end{ThmA}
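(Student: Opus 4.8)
The plan is to deduce Theorem~A from a compactness theorem for the fibration data, combined with a contradiction argument built on Theorem~\ref{thm-anderson-compactness}. The crucial preliminary is that $\mathcal{F}^{m,n}_{Ric}(\delta,\lambda,\mu,\rho,D)$ is precompact \emph{as a family of fibrations}: if $f_i\in\mathcal{F}^{m,n}_{Ric}(\delta_i,\lambda,\mu,\rho,D)$ with $\delta_i\to 0$, then, after passing to a subsequence, there are a Riemannian submersion $f_\infty:(M_\infty,g_\infty)\to(N_\infty,h_\infty)$ of $C^{1,\alpha}$-manifolds with $|\sndf_{f_\infty}|,|A_{f_\infty}|\le\mu$ and diffeomorphisms $\tau_i:M_\infty\to M_i$, $\sigma_i:N_\infty\to N_i$ such that $f_i\circ\tau_i=\sigma_i\circ f_\infty$, such that $\tau_i^*g_i\to g_\infty$ and $\sigma_i^*h_i\to h_\infty$ in $C^{1,\beta}$ for $\beta<\alpha$, and such that $\tau_i,\sigma_i$ are $e^{\varkappa_i}$-bi-Lipschitz with $\varkappa_i\to 0$. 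To prove this I would first apply Theorem~\ref{thm-anderson-compactness} to $(M_i,g_i)$ and $(N_i,h_i)$ separately, so the remaining issues are to control the maps $f_i$ and to make the convergences compatible with the fibrations. For the maps: the $f_i$-fibers have mean curvature vector $\operatorname{tr}\sndf_{f_i}$ of norm $\le(m-n)\mu$, so the tension field $\tau(f_i)$ is bounded by $C(m,\mu)$ (using also that $f_i$ is a $\delta_i$-submersion with $\delta_i$ small, see~(\ref{condition-almost-submersion})); writing $f_i$ in harmonic coordinates (\cite{Anderson1990}) on source and target, its components satisfy the elliptic equation $g_i^{jk}\partial_j\partial_k f_i^a=\tau(f_i)^a-g_i^{jk}(\Gamma^a_{bc}(h_i)\circ f_i)\,\partial_j f_i^b\,\partial_k f_i^c$, whose leading coefficients are uniformly $C^{0,\alpha}$ and whose right-hand side is bounded in $L^\infty$ (the gradient $\partial f_i$ being bounded by~(\ref{condition-almost-submersion})). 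Interior elliptic estimates then give uniform $C^{1,\alpha}$ bounds on $f_i$, hence $C^{1,\beta}$-subconvergence to some $f_\infty$; since the smallest singular value of $df_i$ is $\ge e^{-\delta_i}\to 1$ while $|df_i(v)|\le e^{\delta_i}|v|$ on horizontal vectors, $df_\infty$ has full rank and $|df_\infty(v)|=|v|$ on $\mathcal H_{f_\infty}$, so $f_\infty$ is a genuine Riemannian submersion, and the $C^0$-bounds on $\sndf$ and $A$ pass to the limit.

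To upgrade ``$f_i$ converges as a map'' --- read through the diffeomorphisms of Theorem~\ref{thm-anderson-compactness}, which need not respect fibers --- to ``the fibration converges'' with commuting squares, I would use \emph{uniform local triviality} of $\mathcal{F}^{m,n}_{Ric}$: the bounds~(\ref{condition-stability-regularity}) on $\sndf_f$ and $A_f$ together with $\injrad(M)\ge\rho$ endow the fibration with bounded geometry, so there is a definite $r_0>0$ such that over each metric ball $B_{r_0}(q)\subset N$ the submersion $f$ is $C^{1,\alpha}$-trivial with transition functions uniformly bounded in $C^{1,\alpha}$. This yields a quantitative structural stability: a submersion $C^{1,\beta}$-close (with close source/target metrics) to $f_\infty$ is isomorphic to $f_\infty$ as a smooth fiber bundle through diffeomorphisms $C^0$-close to the identity --- one realizes the nearby fibers as graphs over the $f_\infty$-fibers in each local trivialization and glues by a partition of unity subordinate to a uniform net in $N_\infty$. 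Post-composing the diffeomorphisms of Theorem~\ref{thm-anderson-compactness} for $M_i$ and $N_i$ with such bundle isomorphisms produces $\tau_i,\sigma_i$ making the squares commute and remaining $e^{\varkappa_i}$-bi-Lipschitz, $\varkappa_i\to 0$, since they are $C^0$-close to maps whose pulled-back metrics converge.

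Granting this, Theorem~A follows by contradiction. If it failed, there would be $\eta>0$ such that for all thresholds one could find $\delta<\delta_0$, $\epsilon<\epsilon_0$ and an $\epsilon$-close pair in $\mathcal{F}^{m,n}_{Ric}(\delta,\lambda,\mu,\rho,D)$ admitting no commuting pair of $e^\eta$-bi-Lipschitz diffeomorphisms; letting the thresholds tend to $0$ gives $\delta_i,\epsilon_i\to 0$ and such pairs $f^{(i)}_1,f^{(i)}_2$. Applying the fibration compactness to each sequence and passing to a subsequence yields limit Riemannian submersions $\bar f_k:M^{(k)}_\infty\to N^{(k)}_\infty$ ($k=1,2$) with fibration-convergence diffeomorphisms $\tau^{(i)}_k,\sigma^{(i)}_k$. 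Since $f^{(i)}_1,f^{(i)}_2$ are $\epsilon_i$-close, the $\epsilon_i$-GHAs $\phi_i,\psi_i$ force $M^{(1)}_\infty\cong M^{(2)}_\infty$ and $N^{(1)}_\infty\cong N^{(2)}_\infty$ as $C^{1,\alpha}$-manifolds, and, transported through the convergence diffeomorphisms, $\phi_i,\psi_i$ become maps of fixed compact spaces that are $\epsilon'_i$-GHAs with $\epsilon'_i\to 0$, hence converge uniformly along a subsequence to isometries $\alpha,\beta$; passing to the limit in $d(\psi_i\circ f^{(i)}_1,f^{(i)}_2\circ\phi_i)\le\epsilon_i$ and in the commuting squares gives $\beta\circ\bar f_1=\bar f_2\circ\alpha$, so the two limit fibrations agree up to isometries of source and target. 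Relabeling so that $\bar f_1=\bar f_2$ with common limits $M_\infty,N_\infty$, the maps $\Phi_i:=\tau^{(i)}_2\circ(\tau^{(i)}_1)^{-1}:M^{(i)}_1\to M^{(i)}_2$ and $\Psi_i:=\sigma^{(i)}_2\circ(\sigma^{(i)}_1)^{-1}:N^{(i)}_1\to N^{(i)}_2$ are bundle isomorphisms with $\Psi_i\circ f^{(i)}_1=f^{(i)}_2\circ\Phi_i$, and both are $e^{o(1)}$-bi-Lipschitz as compositions of such maps, hence $e^{\eta}$-bi-Lipschitz for $i$ large --- contradicting the choice of $\eta$. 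The usual unwinding of this scheme produces the functions $\varkappa$; that the bound for $\Psi$ does not involve $\delta$ reflects that $\Psi$ concerns only the base manifolds, which carry no $\delta$-data, whereas the horizontal distortion controlling $\Phi$ is governed by $\delta$.

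The main obstacle is the fibration-compactness step, and within it the two ingredients beyond a direct appeal to Theorem~\ref{thm-anderson-compactness}: first, the uniform local triviality of $\mathcal{F}^{m,n}_{Ric}$ with $C^{1,\alpha}$-controlled transition functions --- this is exactly where \emph{both} hypotheses in~(\ref{condition-stability-regularity}) enter, the bound on $\sndf_f$ controlling the fibers and the tension field and the bound on $A_f$ controlling the horizontal distribution and its failure of integrability; and second, the quantitative structural stability turning $C^{1,\beta}$-closeness of submersions into a bundle isomorphism whose bi-Lipschitz constant tends to $1$. A secondary technical point is that the Gromov--Hausdorff approximations $\phi_i,\psi_i$ are a priori neither continuous nor smooth, so the identification of the two limit fibrations must proceed by conjugating them into maps of fixed compact spaces and extracting a limiting isometry, rather than by taking a limit of the $\phi_i$ directly.
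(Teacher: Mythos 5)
Your overall route differs from the paper's (a global ``compactness of fibrations'' theorem proved by elliptic estimates and structural stability, then a contradiction argument), but it contains a genuine gap at its load-bearing step: the claim that the tension field of a $\delta$-Riemannian submersion with $|\sndf_{f}|\le\mu$ is bounded by $C(m,\mu)$. The identity $\tau(f)=-df(\operatorname{tr}_{\mathcal V}\sndf_f)$ holds only when $\delta=0$; for $\delta>0$ the horizontal part of $\operatorname{tr}\nabla df$ does not cancel and contributes the horizontal derivative of the dilatation of $df$, a second-order quantity that is controlled by neither (\ref{condition-almost-submersion}) nor (\ref{condition-stability-regularity}). The paper's own Remark \ref{rem-no-C0-close} is exactly the counterexample: $f(\theta_1,\theta_2)=\theta_2+\tau(\theta_2)$ on the flat torus has $\sndf_f=A_f=0$ and $|\tau'|\le\delta$, yet $\tau(f)=\tau''(\theta_2)$ is unbounded (take $\tau(\theta)=\delta^3\sin(\theta/\delta^2)$, which even has $\delta\to 0$). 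So the elliptic equation you write down has an uncontrolled right-hand side, the uniform $C^{1,\alpha}$ bound on $f_i$ fails, and with it the $C^{1,\beta}$-subconvergence of the maps and the ``uniform local triviality with $C^{1,\alpha}$-controlled transition functions'' (the latter would likewise require second-order control on $f$). This is precisely the difficulty the paper isolates in Remarks \ref{rem-no-C0-close} and \ref{rem-not-C1}: the $f_i$ are \emph{not} known to be $\varkappa(\epsilon)$-$C^1$-close, which is why the standard ``$C^1$-close fibrations are isotopic'' argument (Proposition A.2.2 of \cite{CFG1992}) is unavailable and why the bi-Lipschitz constant of $\Phi$ must depend on $\delta$. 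The paper's substitute is much weaker and is proved by entirely different means: only the \emph{vertical distributions} are shown to be $\varkappa(\delta,\epsilon)$-close (Proposition \ref{prop-vertical-close}, via blow-up of the tangent spaces to $\mathbb R^m$ and the curve-deviation estimate of Proposition \ref{prop-almost-geodesic-deviation}); $\Phi$ is then written down explicitly by $f_2$-horizontal lifting of geodesics, and $d\Phi$ is shown to be an isomorphism with controlled norm directly from Lemma \ref{lem-variation}, without ever comparing $df_1$ and $df_2$.

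Two secondary problems. First, your contradiction scheme takes $\delta_i\to 0$ together with $\epsilon_i\to 0$, so it cannot produce the diffeomorphism pair asserted by Theorem A for a \emph{fixed} $\delta\in(0,\delta_0)$ and small $\epsilon$, where $\Phi$ is required to exist and be $e^{\varkappa(\delta,\epsilon)}$-bi-Lipschitz with $\varkappa(\delta,\epsilon)$ not small; indeed, for fixed $\delta>0$ your conclusion that $\tau_i$ is $e^{o(1)}$-bi-Lipschitz would contradict the sharpness discussed in Remark \ref{rem-no-C0-close}. Second, even in the regime $\delta_i\to 0$ where the statement is consistent, your argument supplies no mechanism forcing the horizontal distributions (as opposed to their norms) to converge: $df_1|_{\mathcal H}$ and $df_2|_{\mathcal H}$ may differ by a rotation of the target tangent space even when the vertical spaces agree, which is the ``twist'' the paper cannot control and deliberately avoids.
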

In Theorem A  $\varkappa(\delta,\epsilon) = \varkappa(\delta,\epsilon \,|\, m,\lambda,\mu,\rho)$, where (and hereafter) $\varkappa(a,b\,|\,c,\dots)$ is to denote a positive function in $a,b,c,\dots$ such that after fixing the other parameters, $\varkappa(a,b)=\varkappa(a,b\,|\,c,\dots)\to 0$ as $a,b\to 0$.

\begin{remark}\label{rem-no-C0-close}
	In general $\Phi^*g_2$ may be not $\varkappa(\epsilon)$-close to $g_1$ as $\epsilon\to 0$ for $\delta$-Riemannian submersions with $\delta\neq 0$. For example, a $\delta$-Riemannian submersion
	$f:S^1\times S^1\to S^1$, $f(\theta_1,\theta_2)=\theta_2+\tau(\theta_2)$ could be arbitrary close to the canonical projection $(\theta_1,\theta_2)\mapsto \theta_2$ such that (\ref{condition-stability-regularity}) holds with $\mu=0$, while the $C^0$-norm $|\partial_{2}f|_{C^0}\ge 1+\delta_0/2$.
	Then $\Phi$ is given by $(\vartheta_1,\vartheta_2)=\Phi(\theta_1,\theta_2) =(\theta_1,f(\theta_1,\theta_2))$, and $\Phi^*(d\vartheta_1^2+d\vartheta_2^2)$ is definite away from $d\theta_1^2+d\theta_2^2$. Therefore, the conclusion of Theorem A for $\delta$-Riemannian submersions with $\delta>0$ is sharp.
\end{remark}
\begin{remark}\label{rem-C0alpha-close}
	If instead of (\ref{condition-stability-regularity}), $f_i$ satisfies a stronger assumption that the second fundamental form of $f_i$ admits a uniform bound, i.e.,
	\begin{equation}\label{condition-2nd-order}
	|\nabla^2f_i|\le \mu,
	\end{equation}
	then it can be seen that the conclusion of Theorem A holds for any $\delta\ge 0$ and sufficient small $\epsilon\le \epsilon_1(m,\delta,\mu,\rho)$ such that $\Phi^*g_2$ and $g_1$ are $\varkappa(\epsilon)$-close in the $C^{0,\alpha}$-norm.
\end{remark}

\begin{remark}
	A partial motivation to consider $\delta$-Riemannian submersions is that they naturally arise as a manifold is collapsed under bounded sectional curvature (cf. \cite{Fukaya1987Collapsing,Fukaya1988,CFG1992}). We actually prove the diffeomorphic stability in Theorem A for $\delta$-Riemannian submersions whose total spaces are allowed to be collapsed under bounded Ricci curvature, such that the conjugate radius has a positive lower bound, and the bound in (\ref{condition-stability-regularity}) blows up at the rate proportional to $\epsilon^{-1}$; see Theorem \ref{thm-tech}.  A stability result (Proposition A.2.2 in \cite{CFG1992}) for $\delta$-Riemannian submersions with higher regularities $|\nabla^jf|\le \mu_j$ was proved and applied in Cheeger-Fukaya-Gromov's construction \cite{CFG1992} of the nilpotent Killing structure on manifolds of bounded sectional curvature. Similar techniques in this paper can be applied to study the stability of $N$-structures on a collapsed manifold; see \cite{JiangLiXu17}.
\end{remark}

By Theorem A, $\mathcal{F}^{m,n}_{Ric}(\delta,\lambda,\mu,\rho,D)$ with $\delta<\delta_0$ contains only finitely many diffeomorphic isomorphism classes of fiber bundles.
In general the limit of a sequence of $\delta$-Riemannian submersions, however, is only an $e^\delta$-Lipschitz-co-Lipschitz map (briefly, LcL) that may not be smooth.  Recall that (cf. \cite{Rong2012Stability})
for any $Q\ge 1$, a map $f:(X,d_X)\to (Y,d_Y)$ between metric spaces is called $Q$-LcL if 
$$B_{Q^{-1}r}(f(x))\subset f(B_r(x)) \subset B_{Qr}(f(x)),\quad \forall\, x\in X,\, r>0.$$
A $1$-LcL is called a submetry (cf. \cite{Berestovskii2000}).
Clearly, a proper submersion $f:(M,g)\to (N,h)$ is a $\delta$-Riemannian submersion if and only if it is $e^\delta$-LcL. 

By Theorem A, if $f_1,f_2\in \mathcal{F}^{m,n}_{Ric}(\lambda,\mu,\rho,D)=\mathcal{F}^{m,n}_{Ric}(0,\lambda,\mu,\rho,D)$ are Riemannian submersions that are $\epsilon$-close to each other, then the pullback metric $\Phi^*g_2$ is $\varkappa(\epsilon)$-close to $g_1$ in the $C^{0}$-norm and $\Psi^*h_2$ is $\varkappa(\epsilon)$-close to $h_1$ in the $C^{1,\alpha}$-norm. Base on this, we are able to prove that the limit map of Riemannian submersions in $\mathcal{F}^{m,n}_{Ric}(\lambda,\mu,\rho,D)$ is also smooth. Therefore, as a parametrized version of Theorem \ref{thm-anderson-compactness}, the $C^0$-convergence and compactness holds for Riemannian submersions in $\mathcal{F}^{m,n}_{Ric}(\lambda,\mu,\rho,D)$.

\begin{corollary}\label{cor-C0-compactness}
	The set $\mathcal{F}^{m,n}_{Ric}(\lambda,\mu,\rho,D)$ is precompact in the sense that any sequence of Riemannian submersions, $f_i:(M_i,g_i)\to (N_i,h_i)$, admits a subsequence $f_{i_1}$ that converges to a 
	smooth Riemannian submersion $f:(M,g)\to (N,h)$ between $C^{1,\alpha}$-Riemannian manifolds, and there are 
	diffeomorphisms $\Phi_{i_1}:M\to M_{i_1}$, $\Psi_{i_1}:N\to N_{i_1}$ for all sufficient large $i_1$ such that $\Psi_{i_1} \circ f=f_{i_1}\circ \Phi_{i_1}$, and the pullback metric $\Phi_{i_1}^*g_{i_1}$ converges to $g$ in the $C^{0}$-norm.
\end{corollary}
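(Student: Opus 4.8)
The plan is to deduce the corollary from Theorem~A by a Cauchy-sequence argument and then to check that the limiting metric data is carried by a genuine smooth Riemannian submersion. Each $f_i$ is a proper Riemannian submersion, hence $1$-Lipschitz and a submetry; by Theorem~\ref{thm-anderson-compactness} the sequences $\{(M_i,g_i)\}$ and $\{(N_i,h_i)\}$ are Gromov--Hausdorff precompact, and composing the $f_i$ with the corresponding Gromov--Hausdorff approximations yields a uniformly Lipschitz family of maps between essentially fixed spaces, so Arzel\`a--Ascoli gives a subsequence along which $f_i$ converges in the Gromov--Hausdorff topology of Theorem~A. Passing to a further subsequence and relabelling, I fix $\epsilon_k\searrow 0$ with $\epsilon_k<\epsilon_0(m,\lambda,\mu,\rho)$ and $\sum_k\varkappa(\epsilon_k)$ as small as desired, such that $f_{i_k}$ and $f_{i_{k+1}}$ are $\epsilon_k$-close for all $k$.

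Next I apply Theorem~A with $\delta=0$ to the $\epsilon_k$-close pair $f_{i_k},f_{i_{k+1}}$: this produces diffeomorphisms $\Phi^k\colon M_{i_k}\to M_{i_{k+1}}$, $\Psi^k\colon N_{i_k}\to N_{i_{k+1}}$ with $\Psi^k\circ f_{i_k}=f_{i_{k+1}}\circ\Phi^k$, and by the refinement of Theorem~A recorded just before the corollary, $(\Phi^k)^*g_{i_{k+1}}$ is $\varkappa(\epsilon_k)$-close to $g_{i_k}$ in $C^0$ while $(\Psi^k)^*h_{i_{k+1}}$ is $\varkappa(\epsilon_k)$-close to $h_{i_k}$ in $C^{1,\alpha}$; since the base carries two-sided Ricci and injectivity-radius bounds, $\Psi^k$ may additionally be chosen $\varkappa(\epsilon_k)$-close to a Riemannian isometry in $C^{1,\alpha}$. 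Set $M:=M_{i_1}$, $N:=N_{i_1}$, $\Phi_k:=\Phi^{k-1}\circ\cdots\circ\Phi^1$, $\Psi_k:=\Psi^{k-1}\circ\cdots\circ\Psi^1$ (identities when $k=1$); an easy induction gives $\Psi_k\circ f_{i_1}=f_{i_k}\circ\Phi_k$. Because the multiplicative $C^0$-distance between Riemannian metrics on a fixed compact manifold is complete and invariant under diffeomorphism pull-back, the telescoping bound $d_{C^0}(\Phi_{k+1}^*g_{i_{k+1}},\Phi_k^*g_{i_k})=d_{C^0}((\Phi^k)^*g_{i_{k+1}},g_{i_k})\le\varkappa(\epsilon_k)$ makes $\Phi_k^*g_{i_k}$ Cauchy, so it converges in $C^0$ to a continuous metric $g$ on $M$; the near-isometry property of the $\Psi^k$ keeps the compositions $\Psi_k$ in a $C^{2,\alpha}$-bounded family, so that $\Psi_k^*h_{i_k}$ converges (subsequentially in $C^{1,\beta}$ for every $\beta<\alpha$) to a $C^{1,\alpha}$-metric $h$ on $N$, whence $(N,h)$ is a $C^{1,\alpha}$-Riemannian manifold and the $\Phi_k,\Psi_k$ are Gromov--Hausdorff approximations with error tending to $0$.

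Finally, put $f:=f_{i_1}$; by construction $\Psi_k\circ f=f_{i_k}\circ\Phi_k$, and $f\colon(M,g)\to(N,h)$ is a smooth submersion. Differentiating the intertwining relation shows $d\Phi_k$ maps $\mathcal V_f(p)$ isomorphically onto $\mathcal V_{f_{i_k}}(\Phi_k(p))$; since $\Phi_k^*g_{i_k}\to g$ in $C^0$, a $g$-horizontal vector $v\in\mathcal H_f(p)$ is $\Phi_k^*g_{i_k}$-orthogonal to $\mathcal V_f(p)$ up to an error $\varkappa(d_{C^0}(\Phi_k^*g_{i_k},g))$, so $d\Phi_k(v)$ is $g_{i_k}$-horizontal up to that error; hence its $g_{i_k}$-vertical part has norm $\to 0$ while $|d\Phi_k(v)|_{g_{i_k}}=|v|_{\Phi_k^*g_{i_k}}\to|v|_g$. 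Inserting $w=d\Phi_k(v)$ into the identity $|df_{i_k}(w)|_{h_{i_k}}=|w^{\mathrm{hor}}|_{g_{i_k}}$ for the Riemannian submersion $f_{i_k}$, and using $df_{i_k}\circ d\Phi_k=d\Psi_k\circ df$ together with $\Psi_k^*h_{i_k}\to h$, gives $|df(v)|_h=\lim_k |df_{i_k}(d\Phi_k v)|_{h_{i_k}}=|v|_g$; thus $f$ is a smooth Riemannian submersion $(M,g)\to(N,h)$ with $\Psi_k\circ f=f_{i_k}\circ\Phi_k$ and $\Phi_k^*g_{i_k}\to g$ in $C^0$, as claimed. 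I expect the two real obstacles to be: keeping the base-side $C^{1,\alpha}$-closeness under the infinite composition of the $\Psi^k$ (which forces one to build the base diffeomorphisms of Theorem~A uniformly near Riemannian isometries, e.g.\ through harmonic coordinates), and the final step, where the \emph{horizontal distributions}, not merely the distance functions, must be shown to pass to the limit so that the limiting submetry is honestly a smooth Riemannian submersion.
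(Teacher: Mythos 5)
Your mechanism on the total space is essentially the paper's: apply Theorem A to consecutive, $\epsilon_k$-close members of the subsequence, compose the resulting bundle isomorphisms, and use the fact that the multiplicative $C^0$-distance between metrics is complete and invariant under diffeomorphism pullback to make the telescoped metrics $\Phi_k^*g_{i_k}$ Cauchy. Your closing verification that the limit pair $(g,h)$ still makes $f$ a Riemannian submersion (vertical spaces are carried exactly by $\dif\Phi_k$, horizontal norms pass to the limit, polarization) is a legitimate and more explicit version of the paper's final sentence.

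The genuine gap is the one you flag yourself but do not close: the base. You compose the $\Psi^k$ and then need $\Psi_k^*h_{i_k}$ to converge to a $C^{1,\alpha}$-metric $h$, which requires uniform $C^{2,\alpha}$-type bounds on the infinite compositions $\Psi_k$. Theorem A only delivers $\Psi^k$ bi-Lipschitz with constant $e^{\varkappa(\epsilon_k)}$, and $C^{1,\alpha}$-closeness of $(\Psi^k)^*h_{i_{k+1}}$ to $h_{i_k}$ is not composition-stable: $\Psi_k^*T$ is controlled in $C^{1,\alpha}$ by $T$ only through the $C^{2,\alpha}$-norm of $\Psi_k$, which you never bound; and "$\Psi^k$ is $C^{1,\alpha}$-close to an isometry" neither follows from Theorem A nor yields such a bound. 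The paper removes the difficulty at the outset rather than confronting it: by Theorem \ref{thm-anderson-compactness} one first identifies all $N_i$ with a single $N$ carrying metrics $h_i\to h_\infty$ in $C^{1,\alpha}$ (and all $M_i$ with a single $M$), so that consecutive submersions $f_i,f_j:M\to N$ are intertwined via Theorem \ref{thm-tech} with the \emph{identity} on the base, $f_j\circ\Phi_{i,j}=f_i$. No base diffeomorphism is ever composed, $(N,h_\infty)$ is $C^{1,\alpha}$ for free, and the $\Psi_{i_1}$ of the statement is just the Anderson diffeomorphism. Restructure your argument this way and the rest of what you wrote goes through; as written, the base-side step is not proved.
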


\begin{remark}
	It is interesting to ask whether the diffeomorphisms in Corollary \ref{cor-C0-compactness} can be chosen for Riemannian submersions such that the convergence of pullback metrics is in the $C^{0,\alpha}$-norm, which is the best that one can expect in general (cf. the expression of $\Phi$ in Remark \ref{rem-no-C0-close}).
\end{remark}

The regularity condition (\ref{condition-stability-regularity}) is redundant for Riemannian submersions between manifolds of bounded sectional curvature in $\mathcal{M}^n_{Rm}(\lambda,v,D)$. Indeed, because any Riemannian submersion $f:(M,g)\to (N,h)$ can be expressed via distance coordinates on $(N,h)$ such that each component is a distance function to a $f$-fiber (see \cite{Pe16}, \cite{Kapovitch2007Perelman}, \cite{Rong2011Stability}), it follows easily from the Hessian comparison and $C^{1,\alpha}$-compactness that $|\nabla^2f|$ depends only on the lower bound of sectional curvature of $(M,g)$ and the injectivity radius of $(N,h)$ (cf. \cite{Berestovskii2000}). Let $\mathcal{F}^{m,n}_{Rm}(\lambda,v,D)$ be the set consisting of all Riemannian submersions between manifolds in $\mathcal{M}^m_{Rm}(\lambda,v,D)$ and those in $\mathcal{M}^n_{Rm}(\lambda,v,D)$. By Remark \ref{rem-C0alpha-close}, we conclude the following $C^{0,\alpha}$-convergence and compactness of $\mathcal{F}^{m,n}_{Rm}(\lambda,v,D)$.
\begin{corollary}\label{cor-riem-submersion-bounded-sec}
	The set $\mathcal{F}^{m,n}_{Rm}(\lambda,v,D)$ is precompact in the $C^{0,\alpha}$-topology, in the sense of Corollary \ref{cor-C0-compactness} such that the pullback metric converges in the $C^{0,\alpha}$-norm.
\end{corollary}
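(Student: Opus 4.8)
The plan is to exhibit $\mathcal{F}^{m,n}_{Rm}(\lambda,v,D)$ as a subfamily of the Riemannian submersions in $\mathcal{F}^{m,n}_{Ric}(\lambda,\mu,\rho,D)$ that moreover satisfy the second-order bound (\ref{condition-2nd-order}), and then to invoke Corollary \ref{cor-C0-compactness} together with Remark \ref{rem-C0alpha-close}. First I would reduce to the bounded-Ricci setting: by Cheeger's injectivity radius estimate there is $\rho=\rho(k,\lambda,v,D)>0$ with $\mathcal{M}^k_{Rm}(\lambda,v,D)\subset\mathcal{M}^k_{Ric}(\lambda,\rho,D)$ (up to inflating $\lambda$ by a dimensional constant), which covers both total spaces and bases. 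Then I would produce a uniform $\mu=\mu(m,n,\lambda,v,D)$ with $|\nabla^2 f|\le\mu$ for every $f\in\mathcal{F}^{m,n}_{Rm}(\lambda,v,D)$, by the argument indicated before the statement: near a point of $N$ write $f=(r_1,\dots,r_n)$ with each $r_j$ a distance function to an $f$-fiber; the Hessian comparison theorem bounds $|\nabla^2 r_j|$ in terms of $\lambda$ and the (lower bounded, via $\rho$) scale on which these $r_j$ are smooth, while the $C^{1,\alpha}$-compactness of $\mathcal{M}^n_{Rm}(\lambda,v,D)$ keeps the $r_j$ uniformly transverse, so that the passage from $(r_1,\dots,r_n)$ to a fixed chart on $N$ has uniformly bounded derivatives. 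In particular $\max\{|\sndf_f|,|A_f|\}\le\mu$, so every $f\in\mathcal{F}^{m,n}_{Rm}(\lambda,v,D)$ is a Riemannian submersion in $\mathcal{F}^{m,n}_{Ric}(\lambda,\mu,\rho,D)$ satisfying (\ref{condition-2nd-order}).

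Next, given a sequence $f_i\colon(M_i,g_i)\to(N_i,h_i)$ in $\mathcal{F}^{m,n}_{Rm}(\lambda,v,D)$, I would apply Corollary \ref{cor-C0-compactness} to pass to a subsequence converging to a smooth Riemannian submersion $f\colon(M,g)\to(N,h)$ between $C^{1,\alpha}$-manifolds, with diffeomorphisms $\Phi_i,\Psi_i$ realizing $\Psi_i\circ f=f_i\circ\Phi_i$; in particular $f_i$ is $\epsilon_i$-close to $f$ in the Gromov-Hausdorff topology for some $\epsilon_i\to 0$. Running the distance-coordinate argument on $f$ itself --- $(N,h)$ still has injectivity radius $\ge\rho$, and $(M,g)$, being a $C^{1,\alpha}$-limit of manifolds with two-sided bounded sectional curvature, still satisfies the Hessian comparison for distance-to-fiber functions in the barrier sense --- shows that $f$ also satisfies (\ref{condition-2nd-order}) with some $\mu'=\mu'(m,n,\lambda,v,D)$; alternatively one may bypass the limit and compare $f_i$ with $f_j$ directly for large $i,j$. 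Now $f$ and the $f_i$ all lie in $\mathcal{F}^{m,n}_{Ric}(\lambda,\mu',\rho,D)$, all satisfy (\ref{condition-2nd-order}), and are $\epsilon_i$-close with $\epsilon_i\to 0$, so Remark \ref{rem-C0alpha-close} applied to the pair $(f,f_i)$ for large $i$ gives diffeomorphisms $\widetilde\Phi_i\colon M\to M_i$ and $\widetilde\Psi_i\colon N\to N_i$ with $\widetilde\Psi_i\circ f=f_i\circ\widetilde\Phi_i$ such that $\widetilde\Phi_i^*g_i$ is $\varkappa(\epsilon_i)$-close to $g$ in the $C^{0,\alpha}$-norm; combined with $\widetilde\Psi_i^*h_i\to h$ in $C^{1,\alpha}$ from the $C^{1,\alpha}$-compactness of the base, letting $i\to\infty$ yields the claimed $C^{0,\alpha}$-precompactness.

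The one genuinely substantive point is the uniform Hessian bound $|\nabla^2 f_i|\le\mu$ of the first step together with its persistence to the limit $f$; I expect any real difficulty to be purely technical and confined there --- in particular in treating the non-smooth limit metric cleanly --- with everything else being a direct assembly of Corollary \ref{cor-C0-compactness} and Remark \ref{rem-C0alpha-close}.
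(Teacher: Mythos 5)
Your proposal is correct and follows essentially the same route as the paper: the paper's justification is precisely the paragraph preceding the corollary, namely that distance coordinates on $(N,h)$ together with the Hessian comparison and $C^{1,\alpha}$-compactness give a uniform bound $|\nabla^2 f|\le\mu$, reducing the statement to Corollary \ref{cor-C0-compactness} and Remark \ref{rem-C0alpha-close}. Your extra care about the Hessian bound persisting to the (non-smooth) limit, or alternatively comparing $f_i$ with $f_j$ directly, is a reasonable tightening of a point the paper leaves implicit.
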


\begin{remark}
	Corollary \ref{cor-riem-submersion-bounded-sec} improves a diffeomorphic finiteness of those Riemannian submersions in $\mathcal{F}^{m,n}_{Rm}(\lambda,v,D)$ proved by Tapp \cite{Ta2000} (cf. \cite{Ta2002}), where the base spaces were required to be simply-connected.
	Tapp's proof was based on a diffeomorphic finiteness of Riemannian submersions with bounded $T$-tensor and $A$-tensor (\cite{ONeill1966}), which is equivalent to (\ref{condition-stability-regularity}), proved in \cite{Wa1992} (as corrected in \cite{Wa1993}) under a different setting where the base spaces and a fiber are of finite diffeomorphism types and bounded sectional curvature.
\end{remark}

In contrast to the diffeomorphic stability, the \emph{homeomorphic} stability/finiteness in Corollary \ref{cor-riem-submersion-bounded-sec} is well known and extensively studied in more general settings. 
As observed by Kapovitch \cite{Kapovitch2007Perelman}, Perelman's parameterized stability theorem \cite{Per1991}, which plays an essential role in the proof of his stability theorem \cite{Per1991,Kapovitch2007Perelman}, directly implies the homeomorphic compactness of $1$-LcL maps (i.e., submetries) from non-collapsed Alexandrov spaces with curvature bounded below to non-collapsed Riemannian manifolds of bounded sectional curvature. In particular, it covers the homeomorphic compactness of Riemannian submersions proved by Wu \cite{Wu1996} and Tapp \cite{Ta2002}. Similar parameterized stability under lower bounded (sectional) curvature was extended to $\delta$-Riemannian submersions in \cite{Rong2012Stability}.

\begin{theorem}[\cite{Rong2012Stability}]\label{thm-homeo-stability}
	There is $\delta_0(m,v,\rho,D)>0$ such that for any two $\delta_0$-Riemannian submersions $f_i:(M_i,g_i)\to (N_i,h_i)$ ($i=1,2$) whose dimension, sectional curvature, volume and injectivity radius
	\begin{align*}
	\dim M,\dim N\le m,\qquad &\sec(M_i,g_i), \sec(N_i,h_i)\ge \lambda>0,\\ 
	\operatorname{vol}(M_i,g_i)\ge v>0, \qquad &\injrad(N_i,h_i)\ge \rho>0,
	\end{align*} if $f_1$ and $f_2$ are $\epsilon$-close in $d_{GH}$, then there is a pair of homeomorphisms $(\Phi,\Psi)$ such that $\Psi$ and $\Psi$ are $\varkappa(\delta_0,\epsilon|m,v,\rho,D)$-GHAs satisfying $\Psi\circ f_1=f_2\circ \Phi$.
\end{theorem}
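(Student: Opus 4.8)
The plan is to combine Gromov's precompactness with Perelman's parametrized stability theorem \cite{Per1991} (cf. \cite{Kapovitch2007Perelman}), via a contradiction. It suffices to show that for each $\eta>0$ there are $\delta_0(\eta),\epsilon_0(\eta)>0$ such that the conclusion holds with the pair $(\Phi,\Psi)$ consisting of $\eta$-GHAs; the function $\varkappa(\delta_0,\epsilon\,|\,m,v,\rho,D)$ is then assembled in the usual way. So fix $\eta_0>0$ and suppose even this fails: there are $\delta_j\to 0$, $\epsilon_j\to 0$ and pairs of $\delta_j$-Riemannian submersions $f_i^j\colon(M_i^j,g_i^j)\to(N_i^j,h_i^j)$ ($i=1,2$) obeying the curvature, volume, injectivity and diameter bounds, with $f_1^j$ and $f_2^j$ being $\epsilon_j$-close in $d_{GH}$, but admitting no pair of homeomorphisms $(\Phi^j,\Psi^j)$ with $\Psi^j\circ f_1^j=f_2^j\circ\Phi^j$ that are both $\eta_0$-GHAs. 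Since $\dim M_i^j,\dim N_i^j\le m$, $\sec\ge\lambda$, $\operatorname{vol}(M_i^j)\ge v$, $\injrad(N_i^j)\ge\rho$ and $\diam\le D$, after passing to a subsequence the dimensions are constant, all four families GH-converge, and the volume and injectivity lower bounds (via Croke's inequality) make the convergences non-collapsing: $M_i^j\to X$ with $\dim X=\dim M_i^j$ ($X$ an Alexandrov space with $\operatorname{curv}\ge\lambda$), and $N_i^j\to Y$ with $\dim Y=\dim N_i^j$ ($Y$ a non-collapsed GH-limit of closed manifolds, hence itself a closed topological manifold by Perelman's stability theorem). Each $f_i^j$ is $e^{\delta_j}$-LcL, hence $e^{\delta_j}$-Lipschitz; with $\delta_j\to 0$ these maps subconverge by equicontinuity to $1$-LcL maps, i.e. submetries, and the $\epsilon_j$-closeness of $f_1^j$ and $f_2^j$ with $\epsilon_j\to 0$ identifies the two limit spaces on each side and the two limit maps, producing a single submetry $\bar f\colon X\to Y$ (with $\bar f^{-1}(\bar y)=\lim(f_i^j)^{-1}(y_i^j)$ whenever $y_i^j\to\bar y$).

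Now invoke Perelman's parametrized stability theorem for $\bar f$, in the form exploited by Kapovitch \cite{Kapovitch2007Perelman}: in a neighborhood of each point of the manifold $Y$ the submetry $\bar f$ is realized through admissible (distance-to-fiber type) functions, so that the non-collapsing convergences $M_i^j\to X$, $N_i^j\to Y$ presenting $\bar f$ as the limit of the $\delta_j$-almost submetries $f_i^j$ yield, for each $\eta>0$ and all large $j$, homeomorphisms $\Phi_i^j\colon X\to M_i^j$ and $\Psi_i^j\colon Y\to N_i^j$ that are $\eta$-GHAs and exactly fiber-preserving, $f_i^j\circ\Phi_i^j=\Psi_i^j\circ\bar f$; for $\delta_j>0$ the $e^{\delta_j}$-bi-Lipschitz slack enters only as a small controlled perturbation of Perelman's local reconstruction and gluing, which is precisely the origin of the threshold $\delta_0$. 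A diagonal choice makes all of these $\varkappa_j$-GHAs with $\varkappa_j\to 0$. Setting $\Phi^j:=\Phi_2^j\circ(\Phi_1^j)^{-1}$ and $\Psi^j:=\Psi_2^j\circ(\Psi_1^j)^{-1}$ gives homeomorphisms, which — since $d_{GH}(M_1^j,M_2^j)\to 0$ and $d_{GH}(N_1^j,N_2^j)\to 0$ — are $\varkappa_j'$-GHAs with $\varkappa_j'\to 0$, and the fiber-preserving identities chain up to
\begin{align*}
f_2^j\circ\Phi^j
&=\Psi_2^j\circ\bar f\circ(\Phi_1^j)^{-1}
=\Psi^j\circ\Psi_1^j\circ\bar f\circ(\Phi_1^j)^{-1}\\
&=\Psi^j\circ f_1^j\circ\Phi_1^j\circ(\Phi_1^j)^{-1}
=\Psi^j\circ f_1^j .
\end{align*}
For $j$ large, $\varkappa_j'<\eta_0$, contradicting the choice of the $f_i^j$; this single argument yields both $\delta_0,\epsilon_0$ and the $\varkappa$-estimate. (A direct construction, running Perelman's reconstruction on the $f_i^j$ themselves without passing to a limit, is also possible, at the cost of carrying the quantitative estimates through by hand.)

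The main obstacle is making the parametrized-stability input precise: one must verify that the limit submetry $\bar f\colon X\to Y$ onto the non-collapsed manifold $Y$ genuinely falls within the scope of Perelman's parametrized stability — that a fixed finite atlas of distance-coordinate charts on $Y$ pulls back, compatibly with the convergences, to a structure on each $M_i^j$ over which Perelman's local reconstruction applies, and that the resulting local fiber-preserving homeomorphisms patch to a global homeomorphism still intertwining $\bar f$ with $f_i^j$. A second technical layer is the passage from genuine submetries ($\delta=0$) to $\delta_0$-Riemannian submersions: one needs an $e^{\delta_0}$-LcL map to be uniformly close to a submetry on the scales used in the construction, and Perelman's gluing to be stable under this perturbation — which is exactly what pins $\delta_0$ down in terms of $m,v,\rho,D$. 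The remaining ingredients — precompactness, identification of the limits, closedness of the LcL property under GH limits, and the bookkeeping of GHA constants under composition and inversion — are routine.
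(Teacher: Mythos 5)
The paper does not actually prove this statement---it is quoted from \cite{Rong2012Stability}---but your route (Gromov precompactness with non-collapsing from the volume and Croke/injectivity bounds, passage to a single limit submetry $\bar f\colon X\to Y$ onto a non-collapsed manifold base, Perelman's parametrized stability in Kapovitch's formulation via distance-coordinate/admissible functions, and composition of the resulting fiber-preserving homeomorphisms) is precisely the provenance the paper's introduction attributes to this result. Your outline is correct as far as it goes, with the caveat that the two points you yourself flag---admissibility/regularity of the pulled-back distance coordinates and stability of Perelman's local reconstruction and gluing under the $e^{\delta_0}$ perturbation---are the actual substance of \cite{Rong2012Stability} rather than routine bookkeeping.
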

We refer to \cite{Rong2012Stability,Rong2011Stability} for more stability results about $e^\delta$-LcL maps and $\delta$-submetries (weaker than LcL, see \cite{Rong2011Stability}) with certain regularities.

Compared with Theorem \ref{thm-homeo-stability}, the regularity condition (\ref{condition-stability-regularity}) on $\sndf_f$ and $A_f$ in Theorem A is naturally required in order to derive the diffeomorphisms.  We do not know, however, whether the homeomorphic stability in Theorem A holds without assuming that $|\sndf_f|$ and $|A_f|$ are bounded. 

The conclusion of Theorem A would fail if the regularity conditions (\ref{condition-almost-submersion}) and (\ref{condition-stability-regularity}) are removed. For example, let us consider the collection $\mathcal P$ of isomorphism classes of circle bundles from $M=S^2\times S^3$ to $N=S^2\times S^2$, then it contains infinitely many pairwisely non-isomorphic circle bundles, which are represented by elements $(p,q)$ in the  cohomology  $H^2(S^2\times S^2, \mathbb Z)\cong \mathbb Z\oplus\mathbb Z$, with $p,q$ co-prime integers. By Theorem A,  for the metrics on $S^2\times S^3$ and $S^2\times S^2$ lying in $\mathcal{M}^n_{Ric}(\lambda,\rho,D)$, the maps in $\mathcal P$ cannot be (uniformly) $\delta$-Riemannian submersions such that (\ref{condition-stability-regularity}) holds.

Our proof also yields an equivariant version of Theorem A. Let $G$ be a Lie group acting on $M$ and $N$ respectively. A map $f:M\to N$ is called $G$-equivariant if $$f\circ g(x)=g\circ f(x), \quad\text{for } \forall\, g \in G,\; \forall\, x\in M.$$
\begin{theorem}\label{thm-equivariant}
	Let the assumptions be as in Theorem A. If in addition, there is a Lie group $G$ acting isometrically on $(M_i,g_i)$ and $(N_i,h_i)$ respectively such that each $f_i$ is $G$-equivariant, and the closeness of $f_i$ measured by equivariant Gromov-Hausdorff distance is no more than $0\le\epsilon<\epsilon_0(m,\lambda,\mu,\rho)$, then the diffeomorphisms $\Phi,\Psi$ in the bundle isomorphism can be chosen $G$-equivariant.
\end{theorem}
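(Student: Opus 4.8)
The plan is to revisit the proof of Theorem A and rerun it with the $G$-symmetry built in; the point is that every ingredient of that proof can be chosen $G$-invariantly without disturbing the quantitative estimates.

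\emph{Reduction and set-up.} Since $\diam(M_i,g_i),\diam(N_i,h_i)\le D$ and $G$ acts isometrically, the images of $G$ in $\operatorname{Isom}(M_i,g_i)$ and $\operatorname{Isom}(N_i,h_i)$ have compact closure; replacing $G$ by the closure of its image in the product of these four isometry groups affects neither the hypotheses (the equivariance relations for $f_i$ are closed) nor the conclusion (equivariance for the closure implies equivariance for $G$), so we may assume $G$ is compact and fix a Haar probability measure $dg$ on it. Unwinding equivariant Gromov--Hausdorff $\epsilon$-closeness for the pairs $\bigl((M_i\to N_i),G\bigr)$ furnishes $\varkappa(\epsilon)$-GHAs $\phi\colon M_1\to M_2$ and $\psi\colon N_1\to N_2$ with $d(\psi\circ f_1,f_2\circ\phi)\le\varkappa(\epsilon)$ which are moreover \emph{$\varkappa(\epsilon)$-almost equivariant}, meaning
$$\sup_{x\in M_1,\ g\in G}d_{g_2}\!\bigl(\phi(g\!\cdot\!x),g\!\cdot\!\phi(x)\bigr)+\sup_{y\in N_1,\ g\in G}d_{h_2}\!\bigl(\psi(g\!\cdot\!y),g\!\cdot\!\psi(y)\bigr)\le\varkappa(\epsilon).$$

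\emph{Making the construction equivariant.} The diffeomorphisms $\Phi,\Psi$ of Theorem A are assembled from $\phi,\psi$ together with auxiliary data such as an $\epsilon$-net in each space, a subordinate partition of unity, local $C^{1,\alpha}$-harmonic coordinates on $N_1,N_2$ (supplied by Theorem \ref{thm-anderson-compactness}), local trivializations of the bundles $f_1,f_2$, and a fixed mollification scale. I would choose all of this $G$-invariantly: take the $\epsilon$-nets to be unions of $G$-orbits of points; replace any partition of unity $\{\chi_\alpha\}$ by its $G$-average $\alpha\mapsto\int_G \chi_\alpha\circ g^{-1}\,dg$, still smooth and subordinate to the $G$-saturation of the cover; and transport a choice of harmonic chart or local trivialization made at one point of a $G$-orbit to the whole orbit by the (isometric, fiber-preserving) $G$-action, which is legitimate precisely because $f_i$ is $G$-equivariant. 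Running the proof of Theorem A verbatim on this $G$-invariant input, every intermediate object is either $G$-equivariant outright or inherits the $\varkappa(\epsilon)$-almost-equivariance of $\phi,\psi$; when the final center-of-mass and gluing step is carried out with the $G$-invariant partition of unity, it averages only over families that are $G$-orbits or $G$-invariant, so the resulting $\Phi,\Psi$ come out \emph{exactly} $G$-equivariant. The estimates of Theorem A are untouched, all constants still depending only on $m,\lambda,\mu,\rho$, so $\Phi$ is $e^{\varkappa(\delta,\epsilon)}$-bi-Lipschitz, $\Psi$ is $e^{\varkappa(\epsilon)}$-bi-Lipschitz, and $\Psi\circ f_1=f_2\circ\Phi$. (Equivalently one may run Theorem A without symmetry, average $\Psi$ over $G$ to a $G$-equivariant $\Psi_G$, rebuild $\Phi$ over $\Psi_G$, and then average $\Phi$ over $G$ \emph{inside} the fiber $f_2^{-1}(\Psi_G(f_1(x)))$, which all the points $g^{-1}\!\cdot\!\Phi(g\!\cdot\!x)$ share once $\Psi_G$ is equivariant; but one then has to argue the center of mass stays non-degenerate, which needs $C^1$-control on the dependence of the construction on its input, so the built-in version is cleaner.)

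\emph{Main obstacle.} The delicate step is reconciling $G$-equivariance with the matching between the $M_1$-side and the $M_2$-side that produces $\Phi$ over $\Psi$: equivariance forces the local trivialization of $f_1$ (and of $f_2$) to be constant along each $G$-orbit of base points, and one must check that this constraint is consistent with the identifications used to glue $\Phi$, i.e.\ that a trivialization chosen on a single fiber can be spread $G$-equivariantly over its orbit without clashing with the bundle map being built. This is where the full strength of the hypotheses enters: $G$-equivariance of $f_i$, not merely a $G$-action on $M_i$, makes the $G$-translates of a fiber identifiable with it up to the $\delta$-submersion distortion \eqref{condition-almost-submersion} and the $\mu$-bounds \eqref{condition-stability-regularity} on $\sndf_{f_i}$ and $A_{f_i}$, so a choice on one fiber propagates consistently around the orbit, and the induced geometry of the fibers stays uniformly controlled. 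A secondary, routine point is to record that equivariant Gromov--Hausdorff closeness does yield GHAs that are simultaneously almost-bundle-intertwining and almost-$G$-equivariant, as used above.
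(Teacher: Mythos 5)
Your overall instinct --- make every auxiliary choice $G$-invariantly and the construction will come out equivariant --- is the right one, but you have run it against the wrong model of how $\Phi$ is actually built, and as a result both your ``main obstacle'' and your proposed fix miss the point. In this paper $\Phi$ is not assembled from $\epsilon$-nets, partitions of unity, mollifications, or a center-of-mass gluing; it is defined \emph{canonically} by (\ref{def-bundle-map}): $\Phi(x)=\tilde\gamma_x(1)$, where $\tilde\gamma_x$ is the $f_2$-horizontal lift of the unique minimal geodesic of $(N,h)$ from $f_2(x)$ to $f_1(x)$. Since an isometry $g\in G$ maps minimal geodesics to minimal geodesics and, because $f_2$ is $G$-equivariant and $g$ is isometric, preserves the $f_2$-vertical and hence the $f_2$-horizontal distribution, it carries the horizontal lift based at $x$ to the one based at $g\cdot x$; thus $\Phi(g\cdot x)=g\cdot\Phi(x)$ with no averaging and no consistency of local trivializations to check. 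The ``delicate step'' you describe does not arise, and your parenthetical fallback (a fiberwise center of mass of the points $g^{-1}\cdot\Phi(g\cdot x)$) reintroduces exactly the non-degeneracy problem you admit you cannot control.

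What a complete proof actually has to verify, and what your write-up omits, are two other points. First, the reduction of Theorem A to Theorem \ref{thm-tech} replaces the $\epsilon$-GHAs $\phi,\psi$ by diffeomorphisms via Theorem \ref{thm-anderson-compactness}; in the equivariant setting one must invoke the equivariant version of that convergence (standard equivariant $C^{1,\alpha}$-convergence theory) so that $\phi,\psi$ themselves intertwine the $G$-actions --- merely recording, as you do, that they are $\varkappa(\epsilon)$-almost equivariant is not enough unless you then upgrade them, which is precisely the nontrivial input. Second, the proof of Theorem \ref{thm-tech} first smooths the base metric $h$ by the Ricci flow of \cite{DWY1996} and defines $\Phi$ using geodesics of the smoothed metric $\bar h_\epsilon$; one must note that the flow preserves the isometry group (by uniqueness of solutions), so $\bar h_\epsilon$ is still $G$-invariant and the canonical construction remains equivariant. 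With these two observations the theorem follows at once from the manifestly equivariant formula for $\Phi$; your proposal, as written, neither identifies that formula nor supplies these two checks, so it does not yet constitute a proof.
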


The difficulty in proving Theorem A is the lack of regularity. As mentioned earlier in Remark \ref{rem-C0alpha-close}, if one assumes that (\ref{condition-2nd-order}) holds, then the conclusion of Theorem A easily follows from earlier known results. Indeed, by Theorem \ref{thm-anderson-compactness}  the almost Riemannian submersions $f_i$ in Theorem A could be easily reduced to those between two fixed Riemannian manifolds. Moreover, by the $C^{1,\alpha}$-convergence and $|\nabla^2f_i|\le \mu$, one may assume without loss of generality that $f_1$ and $f_2$ are $\varkappa(\epsilon)$-$C^1$-close, i.e.,
\begin{equation}\label{ineq-C1-close}
|P\circ \dif f_1-\dif f_2|\le \varkappa(\epsilon\,|\, \mu,\rho, m),
\end{equation}
where $P:T_{f_1(x)}N\to T_{f_2(x)}N$ is the parallel transport on $(N, h)$.
It is well known that if two fibrations $f_i:M\to N$ ($i=1,2$) are sufficiently $C^1$-close, then one of them can be deformed onto the other by isotopies (cf. Proposition A.2.2 in \cite{CFG1992}). Therefore, for any (not necessarily small) $\delta\ge 0$ Theorem A holds for $\delta$-Riemannian submersions in $\mathcal{F}^{m,n}_{Ric}(\delta,\lambda,\mu,\rho,D)$, provided that $|\nabla^2f|$ admits a uniform upper bound.

However, the almost Riemannian submersions $f_1$ and $f_2$ in Theorem A generally are not $\varkappa(\epsilon)$-$C^1$-close; see Remark \ref{rem-no-C0-close}. Hence the argument in the previous paragraph fails. Instead of the $C^1$-closeness, we will prove \emph{certain weaker regularity} between $f_1$ and $f_2$. That is, after identifying $M_1$ and $M_2$ (resp. $N_1$ and $N_2$) by diffeomorphisms provided by Theorem \ref{thm-anderson-compactness}, the $f_i$-vertical distributions $\mathcal V_{f_i}$ are $\varkappa(\delta,\epsilon)$-close to each other; see Proposition \ref{prop-vertical-close}. Then we are able to prove that the bundle map constructed via horizontal lifting curves (see (\ref{def-bundle-map})) is a diffeomorphism without knowing that $|P\circ \dif f_1-\dif f_2|$ is small.

The remaining of the paper is organized as follows. In Section 2 we reduce the proofs of Theorem A to a technical Theorem \ref{thm-tech}, where the total space is allowed to be collapsed and the bound in (\ref{condition-stability-regularity}) blows up at the rate of closeness of $f_i$. In Section 3 we prove Theorem \ref{thm-tech} by assuming Proposition \ref{prop-vertical-close}, which asserts that under the assumptions of Theorem \ref{thm-tech}, if two $\epsilon$-Riemannian submersions are $\epsilon$-close, then their vertical distributions are $\varkappa(\epsilon)$-close. The proof of Proposition \ref{prop-vertical-close} is given in Section 4. In Section 5 we prove a crucial estimate in proving Proposition \ref{prop-vertical-close}, which clarifies how the deviation of two curves tangent at the same start point depends on their geodesic curvature explicitly at a definite time.

\section{Proof of Theorem A}

Let $f_i:(M_i,g_i)\to (N_i,h_i)$ ($i=1,2$) be two $\delta$-Riemannian submersions in Theorem A. By Theorem \ref{thm-anderson-compactness}, we may assume that $M_1$ and $M_2$ (respectively, $N_1$ and $N_2$) are diffeomorphic with each other. Then the existence of diffeomorphisms $\Phi,\Psi$ in Theorem A is reduced to the following technical theorem, whose condition is weaker than Theorem A.

\begin{theorem}\label{thm-tech}
	Given positive real numbers $c_0, r_0$ and a positive integer $m\in \mathbb{N}^+$, there is $\epsilon_0(c_0, r_0, m)>0$ such that for all $0\le \epsilon<\epsilon_0(c_0, r_0, m)$ the following holds.
	
	Let $f_i:(M,g)\to (N,h)$ $(i=1.2)$ be two $\epsilon$-Riemannian submersions between connected closed Riemannian manifolds whose dimension $\le m$, Ricci curvature, conjugate radius and injectivity radius satisfy 
	$$\begin{aligned}
	|\Ric(M,g)|\leq 1,\quad &\conjrad(M,g)\ge r_0;
	\\
	|\Ric(N,h)|\le 1,
	\quad &\injrad(N,h)\geq r_0.\end{aligned}
	$$
	Assume that 
	\begin{enumerate}
		\item[(4.1)] $f_1$ and $f_2$ are $\epsilon$-close, i.e.,
		$d_h(f_1,f_2)\le\epsilon,$ and
		\item[(4.2)] $\sndf_{f_i}$, $A_{f_i}$ satisfy the following rescaling invariant control,
		$$|\sndf_{f_i}|\cdot d_h(f_1,f_2)\le c_0, \quad |A_{f_i}|\cdot d_h(f_1,f_2)\le c_0.$$
	\end{enumerate}
	Then there is a diffeomorphism $\Phi:M\to M$ that is a bundle isomorphism between fiber bundles $(M,N,f_1)$ and $(M,N,f_2)$, i.e., $f_2\circ \Phi=f_1$, such that for any $x\in M$ and any vector $v\in T_xM$, 
	\begin{equation}\label{ineq-almost-isometry}
	e^{-\varkappa(\epsilon,c_0|r_0,m)}\cdot |v|\le |\dif \Phi(v)|\le e^{\varkappa(\epsilon,c_0|r_0,m)}\cdot |v|.
	\end{equation}
\end{theorem}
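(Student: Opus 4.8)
The plan is to construct $\Phi$ by lifting the motion that carries $f_1$ to $f_2$ along horizontal curves, and to control its differential using the closeness of the vertical distributions. First I would invoke Proposition \ref{prop-vertical-close} (which the excerpt permits me to assume): under the hypotheses of Theorem \ref{thm-tech}, since $f_1$ and $f_2$ are $\epsilon$-close $\epsilon$-Riemannian submersions with the rescaling-invariant bound (4.2) on $\sndf_{f_i}$ and $A_{f_i}$, the vertical distributions $\mathcal V_{f_1}$ and $\mathcal V_{f_2}$ are $\varkappa(\epsilon,c_0\,|\,r_0,m)$-close as subbundles of $TM$. Equivalently, writing $\mathcal H_{f_i}$ for the horizontal complements, the orthogonal projection $\mathcal H_{f_1}(x)\to \mathcal H_{f_2}(x)$ is $\varkappa$-close to an isometry for every $x$. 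I would also record the elementary consequence that, because $\injrad(N,h)\ge r_0$ and the fibers of $f_1,f_2$ are $\varkappa(\epsilon)$-Hausdorff-close (a consequence of $d_h(f_1,f_2)\le\epsilon$ together with the LcL property), each $f_1$-fiber sits in a tubular neighborhood of a unique $f_2$-fiber of radius $\varkappa(\epsilon)$.

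Next I would define the bundle map. For a point $x\in M$, let $p=f_1(x)\in N$ and $q=f_2(x)$; since $d_h(p,q)\le\epsilon<\injrad(N,h)$ there is a unique minimal geodesic $\gamma$ in $N$ from $p$ to $q$. Lift $\gamma$ horizontally with respect to $f_1$ starting at $x$ — the horizontal lift exists and is unique because $f_1$ is a (proper) submersion with a specified horizontal distribution — and set $\Phi(x)$ to be the endpoint of this lift. By construction $f_1(\Phi(x))=q=f_2(x)$; to get $f_2\circ\Phi=f_1$ as stated I would instead lift the geodesic from $q$ to $p$ horizontally with respect to $f_2$ and track the bookkeeping so that the composition lands correctly (the roles of $f_1,f_2$ and of the direction of $\gamma$ are interchangeable, so I will fix whichever convention makes $f_2\circ\Phi=f_1$). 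Smoothness of $\Phi$ follows from smooth dependence of solutions of the horizontal-lift ODE on initial conditions, using that $x\mapsto(\gamma_x,$ endpoint$)$ is smooth once $\injrad(N,h)\ge r_0$ makes $\gamma_x$ depend smoothly on $x$. Injectivity and surjectivity: the same construction with $f_1,f_2$ swapped produces a map $\Psi$, and I would show $\Psi\circ\Phi$ and $\Phi\circ\Psi$ are $\varkappa(\epsilon)$-close to the identity in $C^0$, hence (being smooth fiber-preserving maps isotopic to the identity for $\epsilon$ small) are diffeomorphisms; this forces $\Phi$ to be a diffeomorphism.

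For the bi-Lipschitz estimate \eqref{ineq-almost-isometry} I would split a vector $v\in T_xM$ into its $f_1$-vertical part $v^\top$ and $f_1$-horizontal part $v^\perp$. Along the horizontal lift, the differential of the flow-type map $\Phi$ is governed by a Jacobi-type equation whose coefficients involve $\sndf_{f_1}$, $A_{f_1}$ and the curvature of $(M,g)$; over the lift of $\gamma_x$, which has length $\le\epsilon$, Gr\"onwall's inequality gives $\exp(C(\mu_{\mathrm{eff}}+1)\epsilon)$-control where $\mu_{\mathrm{eff}}:=\sup|\sndf_{f_i}|+\sup|A_{f_i}|$. The key point is that by (4.2) the product $\mu_{\mathrm{eff}}\cdot\epsilon\le \mu_{\mathrm{eff}}\cdot d_h(f_1,f_2)\le 2c_0$, so the exponent is $O(c_0+\epsilon)$ and does not blow up — but this only yields a bound depending on $c_0$, not one tending to $1$. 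To upgrade to $e^{\varkappa(\epsilon,c_0)}$ I would use that $\Phi$ maps $\mathcal V_{f_1}(x)$ into $T_{\Phi(x)}M$ in a way that is $\varkappa$-close to the composition of parallel transport along $\gamma_x$ with the $\varkappa$-isometry $\mathcal V_{f_1}\to\mathcal V_{f_2}$ from Proposition \ref{prop-vertical-close}, and that $d\Phi$ carries $\mathcal H_{f_1}(x)$ into $\mathcal H_{f_1}(\Phi(x))$ with $\varkappa$-distortion (here is where the crucial estimate of Section 5 on the deviation of curves with the same initial tangent and controlled geodesic curvature enters: it quantifies how much two nearby horizontal lifts of $\varkappa$-close geodesics spread apart at the definite time corresponding to arclength $\le\epsilon$). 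Assembling the vertical and horizontal pieces, and using that the two distributions are $\varkappa$-orthogonal-close, gives $|d\Phi(v)|=e^{\pm\varkappa(\epsilon,c_0)}|v|$.

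The main obstacle is precisely this last upgrade: the naive Gr\"onwall estimate only gives a $c_0$-dependent, non-sharp constant, and squeezing it down to $\varkappa(\epsilon,c_0)\to 0$ requires the fine quantitative control of how horizontal lifts of two $\varkappa$-close geodesics diverge — controlled not merely by the ambient curvature but by the geodesic curvature of the lifts, which is where $\sndf_{f_i}$ and $A_{f_i}$ reappear multiplied by the short length $\epsilon$. Handling this carefully, without assuming any $C^1$-closeness of $f_1$ and $f_2$ (which genuinely fails, per Remark \ref{rem-no-C0-close}), is the technical heart of the argument and is exactly what Proposition \ref{prop-vertical-close} together with the Section 5 deviation estimate is designed to supply.
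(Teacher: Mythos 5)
Your overall skeleton matches the paper's: define $\Phi(x)$ as the endpoint of the $f_2$-horizontal lift of the short geodesic from $f_2(x)$ to $f_1(x)$, use Proposition \ref{prop-vertical-close} to decompose a vector into $f_2$-vertical and $f_2$-horizontal parts, and control each piece by variation estimates involving $\sndf_{f_2}$ and $A_{f_2}$ over the lift of a curve of length $\le\epsilon$. Two smaller points first. The ``upgrade'' you agonize over is unnecessary: the theorem only claims distortion $e^{\varkappa(\epsilon,c_0)}$ with $\varkappa\to 0$ as \emph{both} $\epsilon$ and $c_0$ tend to $0$, so a bound of the shape $e^{c_0e^{\epsilon}}+Ce^{3\epsilon+e^{\epsilon}c_0}c_0\,\varkappa(\epsilon)$, coming straight from the Gr\"onwall-type variation estimates (Lemma \ref{lem-variation}) combined with Proposition \ref{prop-vertical-close}, already has the required form; no parallel-transport refinement is needed. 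Also, your route to global bijectivity is shaky: $\Psi\circ\Phi$ being merely $C^0$-close to the identity does not make it a diffeomorphism. The clean argument is that once $\dif\Phi$ is everywhere an isomorphism, $\Phi$ is a local diffeomorphism of a closed manifold, hence a covering map, and since it is homotopic to the identity via $H(t,x)=\tilde\gamma_x(t)$ it has degree one and is therefore a diffeomorphism.

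The genuine gap is the curvature hypothesis on the base. The estimate that controls the $A_{f_2}$-contribution to $\dif\Phi(v)^{\top}$ (Lemma \ref{lem-variation}(1.2)) rests on a Rauch comparison for the Jacobi fields of the variation of geodesics $\gamma_x$ in $(N,h)$, and this requires a two-sided \emph{sectional} curvature bound on $N$; under the stated hypothesis $|\Ric(N,h)|\le 1$ it fails in general, as the paper points out in Remark \ref{rem-dvarphi-ricci}. Your proposal invokes ``a Jacobi-type equation whose coefficients involve \dots the curvature'' without confronting this mismatch between what the comparison needs and what is assumed. The paper's resolution is an extra, essential step: run the Ricci flow on $(N,h)$ for a short time (the Dai--Wei--Ye smoothing) to obtain a $C^0$-close metric $\bar h_\epsilon$ with $|\sec|\le \epsilon^{1/2}C(n,r_0)$ and controlled injectivity radius, rescale both $g$ and $\bar h_\epsilon$ by $\epsilon^{-1}$, observe that the $f_i$ remain almost-Riemannian submersions satisfying the rescaling-invariant bounds (4.2), that the closeness of vertical distributions is scale invariant, and that (\ref{ineq-almost-isometry}) is scale invariant, and only then apply the variation estimates. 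Without this smoothing-and-rescaling reduction your argument does not close under the Ricci-only hypothesis of the theorem.
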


Let us prove Theorem A and Corollary \ref{cor-C0-compactness} by assuming Theorem 2.1.
\begin{proof}[Proof of Theorem A]$~$
	
	Let $f_i:(M_i,g_i)\to (N_i,h_i)$ be two $\delta$-Riemannian submersion that are $\epsilon$-close in Gromov-Hausdorff  distance. That is, there are $\epsilon$-GHAs $\phi:(M_1,g_1)\to (M_2,g_2)$ and $\psi:(N_1,h_1)\to (N_2,h_2)$ such that
	$d(\psi\circ f_1,f_2\circ\phi)\le \epsilon$. By Theorem \ref{thm-anderson-compactness}, $\phi$ and $\psi$ can be replaced by diffeomorphisms, denoted still by $\phi$ and $\psi$, such that the pullback metrics $\phi^*g_2$ and $\psi^*h_2$ are $\varkappa(\epsilon)$-$C^{1,\alpha}$-close to $g_1$ and $h_1$ respectively. Because after a small $C^{1,\alpha}$-perturbation on the metric, $f_i$ is still an $\delta$-Riemannian submersion that satisfies (\ref{condition-stability-regularity}) (with $\delta$ changing a little), we assume that, without loss of generality, $\hat f_1=\psi\circ f_1$ and $\hat f_2=f_2\circ\phi$ are two $\delta$-Riemannian submersions $\hat f_i:(M,g_1)\to (N,h_2)$ such that
	$d_{h_2}(\hat f_1,\hat f_2)\le \epsilon$.  
	
	Now by Theorem \ref{thm-tech}, for $0\le \delta<\epsilon_0(c_0,\rho,m)$ with $c_0=\mu\epsilon$, there is a diffeomorphism $\Phi_1:M_1\to M_1$ such that $\hat f_2\circ \Phi_1=\hat f_1$, and $\Phi_{1}$ is $e^{\varkappa(\delta,\mu\epsilon)}$-bi-Lipschitz. Let $\Phi=\phi\circ\Phi_1$ and $\Psi=\psi$, then 
	$$f_2\circ \Phi=(f_2\circ \phi) \circ \Phi_1=\hat f_2\circ \Phi_1=\hat f_1=\Psi \circ f_1.$$ 
	
	If $f_i$ are Riemannian submersions, then by (\ref{ineq-almost-isometry}), $\dif \Phi_1$ is an $e^{\varkappa(\epsilon,\mu\epsilon)}$-almost isometry, which implies that $\Phi^*g_2$ is $\varkappa(\epsilon)$-$C^0$-close to $g_1$. 
\end{proof}

\begin{proof}[Proof of Corollary \ref{cor-C0-compactness}] $~$
	
	By Theorem \ref{thm-anderson-compactness} and Arzel\`a-Ascoli theorem, any sequence of Riemannian submersion $f_i:(M_i,g_i)\to (N_i,h_i)$ in $\mathcal{F}^{m,n}_{Ric}(\lambda,\mu,\rho,D)$ admits a subsequence, still denoted by $f_i$, that converges to a submetry $f:(M,g_\infty)\to (N,h_\infty)$ between $C^{1,\alpha}$-Riemannian manifolds. It suffices to translate the convergence $f_i\to f$ to a convergence of metric tensors on $M$ with respect to a fixed smooth fiber bundle projection.
	
	By Theorem \ref{thm-anderson-compactness}, we assume that for each $i$, $(M_i,g_i)=(M,g_i)$ and $(N_i,h_i)=(N,h_i)$ such that $g_i$ (resp. $h_i$) converges to $g_\infty$ (resp. $h_\infty$) in the $C^{1,\alpha}$-norm with respect to $g_0$.
	Let $\Phi_{i,j}$ be the diffeomorphism in Theorem A such that $f_{j}\circ \Phi_{i,j}=f_i$, then the pullback metric $\Phi_{i,j}^*g_{j}$ satisfies
	$$|\Phi_{i,j}^*g_{j}-g_i|_{g_0}\to 0,\quad \text{as $i,j\to \infty$,}$$
	and each $f_{j}$ can be represented by $$f_0:(M,(\Phi_{j-1,j}\cdots\circ \Phi_{1,2}\circ \Phi_{0,1})^*g_j)\to (N,h_{j}).$$ By passing to a subsequence, we may assume that
	$$|\Phi_{i,i+1}^*g_{i+1}-g_i|_{g_0}\le 2^{-i-1}.$$
	Then it is easy to see that $(\Phi_{j-1,j}\cdots\circ \Phi_{1,2}\circ \Phi_{0,1})^*g_j$ converges in $C^0$-norm to $g_\infty$ as $j\to \infty$, such that the limit map $f_\infty$ coincides with the smooth submersion $f_0:(M,g_\infty)\to (N,h_\infty)$.
\end{proof}

In the end of this subsection we give an elementary estimate on variations of horizontal lifting curves of a submersion that will be used in the proof of Theorem \ref{thm-tech}. 
\begin{lemma}\label{lem-variation}
	Let $f:(M,g)\to (N,h)$ be an $\epsilon$-Riemannian submersion.
	\begin{enumerate}
		\item[(1.1)] Let $\gamma:[0,1]\to N$ be a minimal geodesic from $p$ to $q\in N$. Let $\tilde \gamma(s,t)$ be a variation in $M$ such that $\tilde \gamma(s,0)$ lies in $F_p=f^{-1}(p)$, and $\tilde \gamma_s(\cdot)=\tilde \gamma(s,\cdot)$ is the horizontal lifting of $\gamma$. 
		Then the variation field $\tilde\partial_s=\partial_s\tilde \gamma$ satisfies
		$$e^{-e^\epsilon |\sndf_f|\cdot r} |v| \le |\tilde \partial_s|(0,1)\le e^{e^\epsilon |\sndf_f|\cdot r} |v|, $$
		where $v=\tilde \partial_s(0,0)$ and $r=d(p,q)$.
		\item[(1.2)] Assume that $|\sec(N,h)|\le 1$. Let $\alpha:(-\delta,\delta)\times [0,1]\to N$ be a smooth variation of geodesics $\alpha_s(\cdot)=\alpha(s,\cdot)$ of length $\le \frac{\pi}{2}$. Let $\tilde \alpha:(-\delta,\delta)\times [0,1]\to M$ be a lifting variation in $M$ such that $\tilde \alpha(\cdot ,0)$ is a horizontal lifting of $\alpha(\cdot ,0)$ and $\tilde \alpha(s,\cdot )$ the horizontal lifting of $\alpha(s,\cdot)$ starting at $\tilde \alpha(s,0)$. Then the variation field $\tilde \partial_s=\partial_s\tilde \alpha$ satisfies that
		$$|\tilde \partial_s^\top|(0,1)\le C (a+b) e^{3\epsilon+e^\epsilon |\sndf_f|r}|A_f|r,$$
		where $a=|\partial_s\alpha|(0,0)$, $b=|\partial_s\alpha|(0,1)$ and $r=\operatorname{length}(\alpha_0)$.
	\end{enumerate}
	
\end{lemma}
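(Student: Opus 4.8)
The plan is to obtain both inequalities from Gronwall-type estimates: one controls how the length (respectively the length of the vertical component) of the variation field $\tilde\partial_s$ changes as $t$ runs over $[0,1]$, the relevant differential inequality being driven by $|\sndf_f|$, $|A_f|$ and the norm of the horizontal lift $\tilde\partial_t$. Both parts use the symmetry relation $\nabla_{\partial_t}\tilde\partial_s=\nabla_{\partial_s}\tilde\partial_t$ (torsion-freeness together with $[\partial_s,\partial_t]=0$) and the orthogonality of the $f$-horizontal and $f$-vertical distributions; part (1.1) is essentially the special case of (1.2) in which the lifted curves project to the \emph{same} geodesic, so the variation field stays vertical.

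For (1.1) the first observation is that $\tilde\partial_s$ is vertical everywhere: since every $\tilde\gamma_s$ is a horizontal lift of the same curve $\gamma$, we have $f(\tilde\gamma(s,t))=\gamma(t)$, hence $\dif f(\tilde\partial_s)=0$; moreover, for each fixed $t$ the curve $s\mapsto\tilde\gamma(s,t)$ lies in the fiber $F_{\gamma(t)}$ with (vertical) velocity $\tilde\partial_s$. Then I would compute
\[
\tfrac{d}{dt}\,|\tilde\partial_s|^2=2\langle\nabla_{\partial_s}\tilde\partial_t,\tilde\partial_s\rangle=-2\langle\tilde\partial_t,\nabla_{\partial_s}\tilde\partial_s\rangle=-2\big\langle\tilde\partial_t,\sndf_f(\tilde\partial_s,\tilde\partial_s)\big\rangle,
\]
where the middle equality uses $\langle\tilde\partial_t,\tilde\partial_s\rangle=0$ and the last one uses that the horizontal part of $\nabla_{\partial_s}\tilde\partial_s$ is the second fundamental form of the fiber $F_{\gamma(t)}$ evaluated on its velocity $\tilde\partial_s$. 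Since $\tilde\partial_t$ is horizontal and $|\dif f(\tilde\partial_t)|=|\dot\gamma|\equiv d(p,q)=r$, condition (\ref{condition-almost-submersion}) gives $|\tilde\partial_t|\le e^\epsilon r$, so $\big|\tfrac{d}{dt}\log|\tilde\partial_s|\big|\le e^\epsilon|\sndf_f|r$ at points where $\tilde\partial_s\neq0$ (at zeros one replaces $|\tilde\partial_s|$ by $\sqrt{|\tilde\partial_s|^2+\eta^2}$ and lets $\eta\to0$); integrating over $[0,1]$ yields both the upper and the lower bound.

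For (1.2) write $\tilde\partial_s=\tilde\partial_s^\top+\tilde\partial_s^\perp$ and set $V=\tilde\partial_s^\top$, $H=\tilde\partial_s^\perp$. Now $f(\tilde\alpha(s,t))=\alpha(s,t)$, so $\dif f(\tilde\partial_s)=\partial_s\alpha$ and $V$ need not vanish, but $V(\cdot,0)=0$ since $\tilde\alpha(\cdot,0)$ is a horizontal lift. Repeating the previous computation for $|V|^2$, the symmetry relation together with $\langle\tilde\partial_t,V\rangle=\langle H,V\rangle=0$ and the definitions of $\sndf_f$ and $A_f$ (O'Neill's $T$ and $A$ tensors, which are tensorial, so one may evaluate them on $V,H$ after extending these locally to vertical/horizontal fields) gives an estimate of the shape
\[
\Big|\tfrac{d}{dt}|V|^2\Big|\le 2|\tilde\partial_t|\,|\sndf_f|\,|V|^2+C|\tilde\partial_t|\,|A_f|\,|H|\,|V|,
\]
hence $\tfrac{d}{dt}|V|\le |\tilde\partial_t|\,|\sndf_f|\,|V|+C|\tilde\partial_t|\,|A_f|\,|H|$. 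To close the estimate, bound $|\tilde\partial_t|\le e^\epsilon r$ as in (1.1), $|H|\le e^\epsilon|\dif f(\tilde\partial_s)|=e^\epsilon|\partial_s\alpha|$, and finally $|\partial_s\alpha|(0,t)\le C_0(a+b)$: indeed $\partial_s\alpha(0,\cdot)$ is a Jacobi field along the geodesic $\alpha_0$, and since $\operatorname{length}(\alpha_0)=r\le\pi/2$ and $|\sec(N,h)|\le1$ there is no conjugate point along $\alpha_0$, so by Rauch comparison the interior norm of this Jacobi field is controlled by its endpoint norms $a,b$. Gronwall's inequality with $V(0,0)=0$ then produces $|V|(0,1)\le C(a+b)\,e^{3\epsilon+e^\epsilon|\sndf_f|r}|A_f|r$, the exponent $3\epsilon$ comfortably absorbing the distortion factors coming from (\ref{condition-almost-submersion}).

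The two integrations are routine; the step requiring care is the vertical/horizontal bookkeeping in (1.2) — making precise that the relevant horizontal and vertical components of $\nabla_{\partial_s}V$ and $\nabla_{\partial_t}H$ are pointwise bounded by $|\sndf_f|\,|V|^2$ and $|A_f|\,|H|\,|V|$, which I would do through the tensoriality of $\sndf_f$ and $A_f$ and the orthogonality identities, since $\tilde\partial_t$ need not be a projectable horizontal field — together with the Jacobi field comparison in $(N,h)$, which is precisely where the hypotheses $|\sec(N,h)|\le1$ and $\operatorname{length}(\alpha_0)\le\pi/2$ enter.
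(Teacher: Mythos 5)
Your proposal is correct and follows essentially the same route as the paper's proof: for (1.1) the identity $\tfrac{d}{dt}\tfrac12|\tilde\partial_s|^2=-\langle\tilde\partial_t,\nabla_{\tilde\partial_s}\tilde\partial_s\rangle$ bounded via $|\sndf_f|$ and $|\tilde\partial_t|\le e^\epsilon r$, and for (1.2) the same decomposition $\tilde\partial_s=\tilde\partial_s^\top+\tilde\partial_s^\perp$ with the bracket/orthogonality identity producing a linear differential inequality for $|\tilde\partial_s^\top|$ driven by $|\sndf_f|$ and $|A_f|$, the Rauch comparison controlling $|\partial_s\alpha|$ by $a$ and $b$, and integration from $\tilde\partial_s^\top(\cdot,0)=0$. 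The only differences are cosmetic (you use the coarser bound $|\partial_s\alpha|\le C_0(a+b)$ where the paper keeps $a(1-t)+bt+(a+b)o(r)$, which changes nothing in the conclusion).
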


\begin{proof}
	We prove (1.1) first.
	Clearly, the variation vector field along $\tilde\gamma$ satisfies that $\tilde \partial_s\in F_{\gamma(t)}$ is vertical, $\tilde\partial_t=\partial_t\tilde \gamma$ is horizontal, and $|\tilde \partial_t|\le e^\epsilon d(p,q)= e^\epsilon r$.
	By directly calculation,
	\begin{align*}
	\frac{\partial}{\partial t} \frac{1}{2}\left<\tilde \partial_s, \tilde \partial_s\right>
	=-\left<\tilde \partial_t,\nabla_{\tilde \partial_s}\tilde \partial_s\right>
	\le \left<\tilde \partial_s,\tilde \partial_s\right> \cdot |\sndf| \cdot e^\epsilon r 
	\end{align*}
	which implies that
	$$\left<\tilde \partial_s,\tilde \partial_s\right>(t)\le \left<\tilde \partial_s,\tilde \partial_s\right>(0)\cdot e^{2e^\epsilon r|\sndf| t}.$$
	Hence by writing $v=\tilde \partial_s(0,0)$,
	$$|\tilde \partial_s|(0,1)\le e^{e^\epsilon |\sndf_f|\cdot r} |v|.$$
	The other side of the desired inequality follows from symmetry.
	
	Next, we prove (1.2).
	By Rauch comparison for $|\sec|\le 1$, the Jacobian field $\partial_s\alpha$ satisfies
	\begin{equation}\label{ineq-integrable-1}
	|\partial_s\alpha|\le a(1-t)+bt +(a+b)o(r)	\end{equation}
	Because $\tilde \partial_t \perp \tilde \partial_s^\top$ and $[\tilde \partial_t,\tilde \partial_s^\top+\tilde \partial_s^\perp]=0$, we derive
	\begin{equation}\label{ineq-integrable-2}
	\begin{aligned}
	\left<\nabla_{\tilde \partial_t}\tilde \partial_s^\top, \tilde \partial_s^\top\right> &= \left< \nabla_{\tilde \partial_s^\top} \tilde \partial_t, \tilde \partial_s^\top \right>+\left< 
	\left[ \tilde \partial_t, \tilde \partial_s^\top \right],\tilde \partial_s^\top \right>\\
	&=-\left<\tilde \partial_t, \nabla_{\tilde \partial_s^\top} \tilde \partial_s^\top \right> - \left< \left[ \tilde \partial_t, \tilde \partial_s^\perp\right],\tilde \partial_s^\top \right>.
	\end{aligned}
	\end{equation}
	By (\ref{ineq-integrable-1}) and (\ref{ineq-integrable-2}),
	we conclude that for $s=0$,
	$$|\tilde \partial_s^\top|'(t)\le e^\epsilon r |\sndf| |\partial_s^\top| + e^{3\epsilon}|A|r \left[a(1-t)+bt+(a+b)o(r)\right].$$
	By integration,
	$$|\tilde \partial_s^\top|(t) \le e^{e^\epsilon r|\sndf|t}\cdot \int_0^te^{3\epsilon-e^\epsilon r |\sndf|\tau}\left[a(1-\tau)+b\tau+(a+b)o(r)\right] d\tau\cdot |A|r.$$ 
	Therefore 
	$$|\tilde \partial_s^\top|(0,1)\le (a+b) Ce^{3\epsilon+e^\epsilon|\sndf|r}|A|r.$$
\end{proof}

\begin{remark}
	(1.2) can be viewed as an extension of Lemma 3.3 in \cite{Ta2000} that was adopted to a Riemannian submersion, where the $T$-tensor and $A$-tensor defined by O'Neill \cite{ONeill1966} were used. Note that it is a standard fact (e.g., see \cite{Xu2010phdthesis}) that the $T$-tensor and $A$-tensor for a Riemannian submersion are equivalent to the second fundamental form  $\sndf_f$ of fibers and the integrability tensor $A_f$ of horizontal distribution in this paper.
\end{remark}
\begin{remark}\label{rem-dvarphi-ricci}
	Because the lack of control on a Jacobi field from the upper bound of Ricci curvature, the Rauch comparison theorem under lower bounded Ricci curvature provided in \cite{DaiWei1995} is not enough to derive (\ref{ineq-integrable-1}) under $|\Ric(N,h)|\le (n-1)$. Therefore the estimate in (1.2) generally fails to hold if the curvature condition is weakened to a Ricci curvature bound.
\end{remark}

\section{Proof of Theorem \ref{thm-tech}}

From now on we are to prove Theorem \ref{thm-tech}. Let $f_i:(M,g)\to (N,h)$ be $\epsilon$-Riemannian submersions satisfying the assumptions of Theorem \ref{thm-tech}. A bundle map $\Phi$ can be defined naturally as follows.
For small $0\le \epsilon\le r< \injrad(N,h)$ and for any $x\in M$, let $p=f_1(x)$ and $q=f_2(x)$, then the image of the bundle map $\Phi(x)$ is defined by
\begin{equation}\label{def-bundle-map}
\Phi(x)=\tilde \gamma_x(1),
\end{equation} where $\tilde \gamma_x:[0,1]\to M$ is the $f_2$-horizontal lifting curve of the unique geodesic $\gamma_x:[0,1]\to N$ from $\gamma_x(0)=q=f_2(x)$ to $\gamma(1)=p=f_1(x)$. Then $\tilde \gamma_x(t)\in f_2^{-1}(\gamma_x(t))$ depends smoothly on $x$. Hence, the map $\Phi$ together with the homotopy $H:[0,1]\times M\to M$, $H(t,x)=\tilde\gamma_x(t)$, are smooth maps such that $\Phi=H_1$ is homotopic to $H_0=\operatorname{Id}_M$, and $\Phi$ is a bundle map, i.e., $f_2\circ\Phi=f_1$.
If $\dif\Phi$ is isomorphic at every point $x\in M$, then $\Phi$ is a covering map from $M$ to itself homotopic to the identity, and hence a diffeomorphic bundle map. 

We first give a sufficient and necessary condition for that $\dif \Phi$ is an isomorphism.
Let $\varphi:f_2^{-1}(B_r(p))\to B_r(p)\times f_2^{-1}(p)$ be a local trivialization of $f_2$ centered at $f_2^{-1}(p)$, which is defined by
\begin{equation}\label{def-local-trivialization}
\varphi=(f_2(x),\varphi_2(x))=(f_2(x),\tilde\gamma_x(1)),\quad \text{for any $x\in f_2^{-1}(B_r(p))$,}
\end{equation} where $\tilde \gamma_x:[0,1]\to M$ is the $f_2$-horizontal lifting as in the construction of $\Phi$ above. Then by definitions (\ref{def-bundle-map}) and (\ref{def-local-trivialization}), 
\begin{equation}\label{bundle-map-coincides}
\Phi(x)=\varphi_2(x), \quad \text{for any $x\in f_1^{-1}(p)$.}
\end{equation}

\begin{lemma}\label{lem-transversal}
	$\dif \Phi$ is isomorphic at $T_xM$ if and only if the $f_1$-vertical distribution at $x$, $\mathcal V_{f_1}(x)$, is transversal to the radially horizontal slice $S_{\varphi_2(x)}=\varphi^{-1}(B_r(p)\times \varphi_2(x))$, i.e.,
	$\dif\varphi_2(w)\neq 0$ for any $w\in \mathcal V_{f_1}(x) \neq 0$.
\end{lemma}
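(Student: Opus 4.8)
The plan is to reduce the statement to a short linear‑algebra argument about the differential $\dif\Phi_x\colon T_xM\to T_{\Phi(x)}M$, using only two structural facts already available: that $\Phi$ is a bundle map, $f_2\circ\Phi=f_1$, and that $\Phi$ coincides with $\varphi_2$ along the fibre $f_1^{-1}(p)$, which is exactly (\ref{bundle-map-coincides}). Since $T_xM$ and $T_{\Phi(x)}M$ have the same dimension $\dim M$, the map $\dif\Phi_x$ is an isomorphism if and only if it is injective, so I only need to characterize injectivity of $\dif\Phi_x$.

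First I would localize the kernel. Differentiating $f_2\circ\Phi=f_1$ gives $\dif f_2|_{\Phi(x)}\circ\dif\Phi_x=\dif f_1|_x$, so if $\dif\Phi_x(u)=0$ then $\dif f_1|_x(u)=0$, i.e. $u\in\mathcal V_{f_1}(x)=\ker\dif f_1|_x$. Hence $\ker\dif\Phi_x\subseteq\mathcal V_{f_1}(x)$, and $\dif\Phi_x$ is injective on $T_xM$ if and only if its restriction to $\mathcal V_{f_1}(x)$ is injective. Next, since $p=f_1(x)$ and $d_h(f_1(x),f_2(x))\le d_h(f_1,f_2)\le\epsilon\le r$, the point $x$ lies in the domain $f_2^{-1}(B_r(p))$ of the trivialization $\varphi$, so $\varphi_2(x)$ and the slice $S_{\varphi_2(x)}$ are defined; and by (\ref{bundle-map-coincides}) the maps $\Phi$ and $\varphi_2$ agree on the submanifold $f_1^{-1}(p)$, whose tangent space at $x$ is precisely $\mathcal V_{f_1}(x)$. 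Therefore $\dif\Phi_x|_{\mathcal V_{f_1}(x)}=\dif\varphi_2|_x|_{\mathcal V_{f_1}(x)}$, and the injectivity condition becomes $\ker(\dif\varphi_2|_x)\cap\mathcal V_{f_1}(x)=\{0\}$, that is, $\dif\varphi_2(w)\neq0$ for every nonzero $w\in\mathcal V_{f_1}(x)$.

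Finally I would match this with transversality to $S_{\varphi_2(x)}$. Because $\varphi$ is a diffeomorphism onto $B_r(p)\times f_2^{-1}(p)$, the component $\varphi_2$ is a submersion onto $f_2^{-1}(p)$, so $S_{\varphi_2(x)}=\varphi_2^{-1}(\varphi_2(x))$ is a submanifold with $T_xS_{\varphi_2(x)}=\ker(\dif\varphi_2|_x)$ of dimension $\dim N$, while $\dim\mathcal V_{f_1}(x)=\dim M-\dim N$; since these add up to $\dim M$, the condition $T_xS_{\varphi_2(x)}\cap\mathcal V_{f_1}(x)=\{0\}$ is equivalent to the genuine transversality $T_xS_{\varphi_2(x)}\oplus\mathcal V_{f_1}(x)=T_xM$. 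Chaining the equivalences gives the lemma in both directions. There is no real obstacle here: the only points requiring care are checking that $x$ lies in the domain of $\varphi$ so that $S_{\varphi_2(x)}$ makes sense, and identifying $T_x\bigl(f_1^{-1}(p)\bigr)$ with $\mathcal V_{f_1}(x)$; the genuinely hard analytic work — showing that this transversality actually holds under the hypotheses of Theorem \ref{thm-tech}, via the closeness of the vertical distributions in Proposition \ref{prop-vertical-close} and the rescaling‑invariant bounds on $\sndf_{f_i}$ and $A_{f_i}$ — is deferred and plays no role in this lemma.
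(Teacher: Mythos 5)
Your argument is correct and follows the same route as the paper's own proof: you localize $\ker\dif\Phi_x$ inside $\mathcal V_{f_1}(x)$ by differentiating $f_2\circ\Phi=f_1$, and then use (\ref{bundle-map-coincides}) to identify $\dif\Phi$ with $\dif\varphi_2$ on $\mathcal V_{f_1}(x)$. The extra details you supply (the dimension count equating injectivity with isomorphism and nondegeneracy with genuine transversality, and the check that $x$ lies in the domain of $\varphi$) are left implicit in the paper but are exactly right.
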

\begin{proof}
	First, if $\dif\Phi(w)=0$ for some $0\neq w\in T_xM$, then $w\in \mathcal V_{f_1}(x)$. Indeed, if $w\not\in \mathcal V_{f_1}(x)$, then $\dif f_2\circ \dif\Phi(w)=\dif f_1(w)\neq 0$, which implies that $\dif \Phi(w)\neq 0$. Secondly, by (\ref{bundle-map-coincides}), $\dif \Phi|_{\mathcal V_{f_1}(x)}=\dif \varphi_2|_{\mathcal V_{f_1}(x)}$.
	Therefore for any $w\in \mathcal V_{f_1}(x)$, $\dif \Phi(w)=0$ if and only if $\dif \varphi_2(w)=0$, i.e., $w$ is tangent to the slice $S_{\varphi_2(x)}$.
\end{proof}

The key estimate in proving that $\dif \Phi$ is an isomorphism is the following closeness of $f_{i}$-vertical distributions. 
\begin{proposition}\label{prop-vertical-close}
	Under the assumption of Theorem \ref{thm-tech}, the vertical subspaces $\mathcal V_{f_1}(x)$ and $\mathcal V_{f_2}(x)$ are close in $T_xM$ in the sense that the dihedral angle between the subspaces $\mathcal V_{f_1}(x)$ and $\mathcal V_{f_2}(x)$ $\le \varkappa(\epsilon\,|\,c_0,r_0,m)$.
\end{proposition}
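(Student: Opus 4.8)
The plan is to estimate, for an arbitrary unit vector $w\in\mathcal V_{f_1}(x)$, the angle $\vartheta_0:=\angle(w,\mathcal V_{f_2}(x))$, and symmetrically for unit vectors of $\mathcal V_{f_2}(x)$; since $\dim\mathcal V_{f_1}(x)=\dim\mathcal V_{f_2}(x)=m-n$, these two bounds together control the dihedral angle. Because $f_2$ is an $\epsilon$-Riemannian submersion, $e^{-\epsilon}\sin\vartheta_0\le|\dif f_2(w)|\le e^{\epsilon}\sin\vartheta_0$, so the whole statement reduces to proving $|\dif f_2(w)|\le\varkappa(\epsilon\,|\,c_0,r_0,m)$.

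Write $p_1=f_1(x)$, $p_2=f_2(x)$, $d=d_h(f_1,f_2)\le\epsilon$, and let $F_1=f_1^{-1}(p_1)$ be the fiber through $x$. The first, elementary, ingredient is the confinement $f_2(F_1)\subset \bar B_{2d}(p_2)$: for $y\in F_1$ one has $d_h(f_2(y),p_2)\le d_h(f_2(y),f_1(y))+d_h(f_1(y),f_1(x))+d_h(f_1(x),f_2(x))\le d+0+d$. The second ingredient is a comparison of two curves issuing from $x$ with the same initial velocity $w$: the unit-speed geodesic $\sigma$ of the submanifold $(F_1,g|_{F_1})$ with $\sigma'(0)=w$, and the unit-speed geodesic $\rho$ of $(M,g)$ with $\rho'(0)=w$. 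The geodesic curvature of $\rho$ in $M$ is $0$, and that of $\sigma$ is $|\nabla^M_{\sigma'}\sigma'|=|\sndf_{f_1}(\sigma',\sigma')|\le|\sndf_{f_1}|\le c_0/d$; by the estimate of Section~5 — the promised control on how far two curves tangent at a common point drift apart at a definite time, in terms of their geodesic curvatures, valid under the conjugate-radius lower bound on $M$ that here substitutes for an unavailable sectional bound — one gets $d_M(\sigma(s),\rho(s))\le \tfrac12(c_0/d)s^{2}+o(s^{2})$ on a definite arc. Applying $f_1$ (which is $e^{\epsilon}$-Lipschitz) and using $f_1\circ\sigma\equiv p_1$ gives $d_h(f_1(\rho(s)),p_1)\le \tfrac{e^{\epsilon}}{2}(c_0/d)s^{2}+o(s^{2})$; applying $f_2$ and combining with the confinement gives $d_h(f_2(\rho(s)),p_2)\le 2d+\tfrac{e^{\epsilon}}{2}(c_0/d)s^{2}+o(s^{2})$.

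On the other hand, $s\mapsto f_2(\rho(s))$ is a curve in $N$ starting at $p_2$ with initial velocity $\dif f_2(w)$, and its second-order behaviour is governed by $\nabla\,\dif f_2$, which the O'Neill-type identities bound through $\sndf_{f_2},A_{f_2}$ (modulo lower-order terms from the $\epsilon$-distortion), hence by $C(c_0/d)$; so $d_h(f_2(\rho(s)),p_2)\ge|\dif f_2(w)|\,s-C(c_0/d)s^{2}+o(s^{2})$ for $|\dif f_2(w)|\,s\le r_0$ (using $\injrad(N)\ge r_0$). Combining the two inequalities, $|\dif f_2(w)|\,s\le 2d+C'(c_0/d)s^{2}+o(s^{2})$ for all admissible $s$; optimizing in $s$ — the admissible range having order $d$, since one may first rescale the metrics on $M$ and $N$ by $d^{-1}$, after which $|\sndf_{f_i}|,|A_{f_i}|\le c_0$, $\conjrad,\injrad\ge r_0/d\to\infty$ and $|\Ric|\le d^{2}\to0$, so that $\sigma$ is a genuine geodesic of $\widetilde F_1$ on a definite arc (its extrinsic, hence by Gauss its intrinsic, curvature being bounded) — yields $|\dif f_2(w)|\le\varkappa(\epsilon\,|\,c_0,r_0,m)$, the error tending to $0$ with $\epsilon$ because in the rescaled limit the ambient spaces flatten and $f_1,f_2$, being $d$-close, acquire a common limit. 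Running the same argument with $f_1$ and $f_2$ interchanged completes the bound on the dihedral angle.

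The step I expect to be the crux is precisely the curve-deviation estimate imported from Section~5, together with the possible collapse of $M$. A two-sided Ricci bound does not control a Jacobi field from above (cf.\ the remark after Lemma~\ref{lem-variation}), so Rauch comparison is unavailable; the lower bound on the conjugate radius of $M$ must be exploited directly to keep $\sigma$, $\rho$ and their comparison meaningful on a uniformly definite (rescaled) arc and to produce the explicit drift with constants depending only on $c_0,r_0,m$. Since only the conjugate radius, not the injectivity radius, is bounded below on $M$, the total space may be collapsed, so every estimate has to be carried out along the one-dimensional curves $\sigma,\rho$ and their images and be stable under passage to a possibly lower-dimensional limit — which is what makes the bookkeeping, rather than the underlying idea, delicate.
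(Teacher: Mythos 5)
Your reduction to bounding $|\dif f_2(w)|$ for a unit vector $w\in\mathcal V_{f_1}(x)$, the confinement $f_2(F_1)\subset \bar B_{2d}(p_2)$, and the use of the Section 5 deviation estimate after rescaling by the closeness scale $d$ are all in the spirit of the paper's argument. The genuine gap is the lower bound $d_h(f_2(\rho(s)),p_2)\ge|\dif f_2(w)|\,s-C(c_0/d)s^{2}$. You justify it by asserting that $\nabla\dif f_2$ is bounded through $\sndf_{f_2}$ and $A_{f_2}$ ``modulo lower-order terms from the $\epsilon$-distortion.'' For an honest Riemannian submersion this is true (the horizontal--horizontal part of $\nabla\dif f$ vanishes), but for an $\epsilon$-Riemannian submersion with $\epsilon>0$ the horizontal--horizontal Hessian is \emph{not} controlled by $\sndf_{f_2}$, $A_{f_2}$ and $\epsilon$: the example in Remark \ref{rem-no-C0-close}, $f(\theta_1,\theta_2)=\theta_2+\tau(\theta_2)$, has $\sndf_f=A_f=0$ and arbitrarily small distortion while $|\nabla^2f|=|\tau''|$ is unbounded. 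Since $\rho$ is an ambient geodesic whose velocity has an a priori nonzero $f_2$-horizontal component (that component is exactly the quantity you are trying to bound), the acceleration of $f_2\circ\rho$ picks up this uncontrolled term and the image curve may turn at an uncontrolled rate, so the lower bound fails. This is precisely the obstruction flagged in Remark \ref{rem-not-C1} (no $C^1$-closeness is available) and the reason (\ref{condition-2nd-order}) is called strictly stronger than (\ref{condition-stability-regularity}).

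Even granting that step, the inequality $|\dif f_2(w)|\,s\le 2d+C'(c_0/d)s^{2}$ optimizes (at $s\sim d$) to $|\dif f_2(w)|\le C\sqrt{c_0}$, a constant that does not tend to $0$ with $\epsilon$, whereas the Proposition requires $\varkappa(\epsilon\,|\,c_0,r_0,m)$. Your fallback --- ``in the rescaled limit the spaces flatten and $f_1,f_2$ acquire a common limit'' --- is where the actual work lies, and it needs two ingredients you do not supply: (i) the blown-up maps converge to submetries of Euclidean spaces, which are rigidly linear projections, so the two limits differ by a translation and have \emph{parallel affine} fibers; and (ii) the tangent planes $\mathcal V_{f_{i}}$ at the base point actually converge to the limit fibers --- this is where the superlinear exponent $\tfrac54$ in (\ref{eq-deviation-1}) is decisive, since a straight line through a point of an affine subspace whose distance to that subspace grows slower than linearly must lie in it. Without (i) and (ii), ``acquiring a common limit'' gives no information about the vertical subspaces at $x$. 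So the skeleton (blow-up, conjugate-radius control of the lifted geometry, curve deviation via geodesic curvature) is right, but one load-bearing step is wrong and the two that replace it in the paper are missing.
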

The \emph{dihedral angle} between $\mathcal V_{f_1}(x)$ and $\mathcal V_{f_2}(x)$ is defined to be the Hausdorff distance $d_H(\mathcal V_{f_1}(x)\cap T_x^1M,\mathcal V_{f_2}(x)\cap T_x^1M)$ in the unit sphere  $T_x^1M\subset T_xM$. The proof of Proposition \ref{prop-vertical-close} is left to the next section.
\begin{remark}\label{rem-not-C1}
	We do not know whether $f_1$ and $f_2$ in Theorem \ref{thm-tech} are $\varkappa(\epsilon)$-$C^1$-close in the sense of (\ref{ineq-C1-close}), mainly due to the lack of control on the twist of $S_{\varphi_2(x)}$. It should be pointed out that the $C^1$-closeness (\ref{ineq-C1-close}) was crucial to the earlier proofs in  \cite{Peters1984} (cf. \cite{Wa1992,Wa1993}) of (parametrized) diffeomorphic finiteness under bounded sectional curvature.
\end{remark}
By assuming Proposition \ref{prop-vertical-close}, we continue the proof of Theorem \ref{thm-tech}. 
\begin{proposition}\label{prop-almost-isometry}
	The estimate (\ref{ineq-almost-isometry}) holds for the bundle map $\Phi:M\to M$ defined in (\ref{def-bundle-map}), provided that $|\sec(N,h)|\le 1$.
\end{proposition}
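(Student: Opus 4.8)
The plan is to realize $\dif\Phi(v)$, for $x\in M$ and $v\in T_xM$, as the value at $t=1$ of the variation field $\tilde\partial_s=\partial_s\tilde\alpha$ of the family $\tilde\alpha(s,t)=\tilde\gamma_{x(s)}(t)$ of $f_2$-horizontal lifts used to define $\Phi$, where $s\mapsto x(s)$ is any smooth curve with $x(0)=x$, $\dot x(0)=v$ (the answer is independent of the choice since $\Phi$ is smooth). Since $f_1,f_2$ are $\epsilon$-close, the base geodesic $\gamma_x$ has length $r=d_h(f_1(x),f_2(x))\le d_h(f_1,f_2)\le\epsilon$, so with $|\sec(N,h)|\le1$ and $\epsilon_0\le\tfrac\pi2$ every Jacobi-field and Rauch estimate below takes place on geodesic arcs of length $\le\epsilon$; in particular hypothesis (4.2) gives $|\sndf_{f_i}|\,r\le c_0$ and $|A_{f_i}|\,r\le c_0$. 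Using linearity of $\dif\Phi$ I will estimate $\dif\Phi(v)$ separately for $v\in\mathcal H_{f_2}(x)$ and $v\in\mathcal V_{f_2}(x)$ and then recombine $\dif\Phi(v)=\dif\Phi(v^\perp)+\dif\Phi(v^\top)$; throughout, $\varkappa$ abbreviates $\varkappa(\epsilon,c_0\mid r_0,m)$ and $\varkappa(\epsilon)$ denotes the bound of Proposition~\ref{prop-vertical-close}.

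For $v\in\mathcal H_{f_2}(x)$, take $x(\cdot)$ to be the $f_2$-horizontal lift of a base curve with initial velocity $\dif f_2(v)$, so that $\tilde\alpha$ is exactly the configuration of Lemma~\ref{lem-variation}(1.2): the base variation $\alpha(s,t)=\gamma_{x(s)}(t)$ is a variation of geodesics of length $r\le\epsilon$ whose endpoint speeds $a=|\dif f_2(v)|$ and $b=|\dif f_1(v)|$ are both $\le e^\epsilon|v|$. Splitting $\tilde\partial_s=\tilde\partial_s^\top+\tilde\partial_s^\perp$ with respect to $f_2$, Lemma~\ref{lem-variation}(1.2) combined with (4.2) bounds the vertical part by $|\tilde\partial_s^\top|(0,1)\le C(a+b)e^{3\epsilon+e^\epsilon c_0}c_0\le\varkappa|v|$, while $\dif f_2(\tilde\partial_s)=\partial_s\alpha$ forces $|\tilde\partial_s^\perp|(0,1)\in[e^{-\epsilon},e^\epsilon]\cdot b$. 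Finally, Proposition~\ref{prop-vertical-close} makes $\mathcal H_{f_1}(x)$ and $\mathcal H_{f_2}(x)$ $\varkappa(\epsilon)$-close, so the $f_1$-horizontal component of $v$ has norm $\ge(1-\varkappa(\epsilon))|v|$ and hence $b=|\dif f_1(v)|\in[e^{-\varkappa},e^\varkappa]|v|$. Since $\tilde\partial_s^\top(0,1)\perp\tilde\partial_s^\perp(0,1)$, this yields $|\dif\Phi(v)|\in[e^{-\varkappa},e^\varkappa]|v|$, with $\dif\Phi(v)$ making angle $\le\varkappa$ with $\mathcal H_{f_2}(\Phi(x))$.

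For $v\in\mathcal V_{f_2}(x)$, take $x(\cdot)$ inside the fiber $f_2^{-1}(f_2(x))$, so the near endpoint of $\alpha(s,\cdot)$ stays fixed and only the far one moves, at speed $|\dif f_1(v)|\le\varkappa(\epsilon)|v|$ — this last bound because Proposition~\ref{prop-vertical-close} now says $\mathcal V_{f_1}(x)$ is $\varkappa(\epsilon)$-close to $\mathcal V_{f_2}(x)$, on which $\dif f_1$ vanishes. Thus the base Jacobi field satisfies $\partial_s\alpha(0,0)=0$ and $|\partial_s\alpha(0,\cdot)|\le\varkappa(\epsilon)|v|$ on $[0,1]$, so $|\tilde\partial_s^\perp|(0,1)\le\varkappa(\epsilon)|v|$; for the vertical part, with $\tilde\partial_s^\top(0,0)=v$, I would feed identity (\ref{ineq-integrable-2}) into a Gr\"onwall estimate for $\tfrac{d}{dt}\tfrac12|\tilde\partial_s^\top|^2$: the term $\langle\tilde\partial_t,\sndf_{f_2}(\tilde\partial_s^\top,\tilde\partial_s^\top)\rangle$ contributes at most $e^\epsilon c_0\,|\tilde\partial_s^\top|^2$, and the $A_{f_2}$-term, estimated by $|A_{f_2}|\,|\tilde\partial_t|\,|\tilde\partial_s^\perp|\,|\tilde\partial_s^\top|\le\varkappa\,|v|\,|\tilde\partial_s^\top|$, is a source of size $\varkappa|v|$. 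Integrating over $[0,1]$ gives $|\tilde\partial_s^\top|(0,1)\in[e^{-e^\epsilon c_0}-\varkappa,\ e^{e^\epsilon c_0}+\varkappa]\cdot|v|\subset[e^{-\varkappa},e^\varkappa]|v|$, so $|\dif\Phi(v)|\in[e^{-\varkappa},e^\varkappa]|v|$, and $\dif\Phi(v)$ makes angle $\le\varkappa$ with $\mathcal V_{f_2}(\Phi(x))$.

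To finish for general $v=v^\perp+v^\top$: $\dif\Phi(v^\perp)$ has norm in $[e^{-\varkappa},e^\varkappa]|v^\perp|$ and lies $\varkappa$-close to $\mathcal H_{f_2}(\Phi(x))$, while $\dif\Phi(v^\top)$ has norm in $[e^{-\varkappa},e^\varkappa]|v^\top|$ and lies $\varkappa$-close to $\mathcal V_{f_2}(\Phi(x))$, so these two vectors are nearly orthogonal, the cross term in $|\dif\Phi(v)|^2$ is at most $\varkappa|v|^2$, and $|\dif\Phi(v)|^2\in[e^{-2\varkappa},e^{2\varkappa}]\bigl(|v^\perp|^2+|v^\top|^2\bigr)=[e^{-2\varkappa},e^{2\varkappa}]|v|^2$, which is (\ref{ineq-almost-isometry}). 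I expect the delicate point to be the vertical estimate in the second case: because the lifts $\tilde\gamma_x$ are not geodesics, $\tilde\partial_s^\top$ obeys only the first-order relation (\ref{ineq-integrable-2}), and the $A_{f_2}$-term there must be absorbed — this is precisely where the \emph{scale-invariant} control of hypothesis (4.2), rather than plain bounds on $\sndf_{f_i}$ and $A_{f_i}$, is needed, and it is also where Proposition~\ref{prop-vertical-close} is indispensable, since without $\mathcal V_{f_1}\approx\mathcal V_{f_2}$ one could see neither that $|\dif f_1(v)|$ is small for $f_2$-vertical $v$ nor that $|\dif f_1(v)|\approx|v|$ for $f_2$-horizontal $v$.
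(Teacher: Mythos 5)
Your argument is correct and draws on the same toolbox as the paper --- the variation estimates of Lemma \ref{lem-variation} and the closeness of vertical distributions from Proposition \ref{prop-vertical-close} --- but it organizes the decomposition differently. The paper splits $T_xM=\mathcal V_{f_1}(x)\oplus\mathcal H_{f_1}(x)$: on $\mathcal V_{f_1}(x)$ it replaces $\Phi$ by the trivialization map $\varphi_2$ of (\ref{def-local-trivialization}) via (\ref{bundle-map-coincides}), whose lifted base geodesics all end at the \emph{fixed} point $p=f_1(x)$, so Lemma \ref{lem-variation}(1.1) and (1.2) (the latter with $b=0$) apply verbatim to the two $f_2$-components of $v$; on $\mathcal H_{f_1}(x)$ the identity $|\dif f_2\circ\dif\Phi(u)|=|\dif f_1(u)|$ needs no appeal to Proposition \ref{prop-vertical-close}; and the recombination is cleaner because $\dif\Phi(\mathcal V_{f_1})\subset\mathcal V_{f_2}$ holds exactly. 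You instead split with respect to $f_2$ and work with $\Phi$ itself, whose base geodesics have a \emph{moving} far endpoint; as you correctly note, Lemma \ref{lem-variation}(1.1) then does not apply literally in the $f_2$-vertical case, and you must rerun the Gr\"onwall estimate with the $A_{f_2}$-source term from (\ref{ineq-integrable-2}), which works because $|\tilde\partial_s^\perp|\le e^\epsilon|\partial_s\alpha|\le\varkappa(\epsilon)|v|$ by the Jacobi-field bound on the short base geodesic. You also invoke Proposition \ref{prop-vertical-close} twice more than the paper does (to get $|\dif f_1(v)|\approx|v|$ on $\mathcal H_{f_2}(x)$ and $|\dif f_1(v)|\le\varkappa(\epsilon)|v|$ on $\mathcal V_{f_2}(x)$), and your final recombination rests on approximate rather than exact orthogonality of the images --- but you verify precisely that, so the cross term is controlled. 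Both routes yield (\ref{ineq-almost-isometry}); the paper's is slightly more economical, while yours bypasses $\varphi_2$ and Lemma \ref{lem-transversal} entirely.
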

\begin{proof}
	Firstly, let us prove that (\ref{ineq-almost-isometry}) holds for any $f_1$-vertical unit vector $v\in \mathcal V_{f_1}(x)$.  By the definition of $\Phi$, $\dif \Phi|_{\mathcal V_{f_1}(x)}=\dif \varphi_2|_{\mathcal V_{f_1}(x)}$. Hence it suffices to estiamte $\dif \varphi_2(v)$.  Let $v=v^{\bot}+v^{\top}$ be the orthogonal decomposition of $v$ such that $v^\bot\in \mathcal H_{f_2}(x)$ and $v^\top\in \mathcal V_{f_2}(x)$.
	By Proposition \ref{prop-vertical-close}, 
	\begin{equation}\label{ineq-prop-C1-close1}
	|v^\bot|\le \varkappa(\epsilon),\quad |v^\top|\ge \sqrt{1-\varkappa^2(\epsilon)}.
	\end{equation}
	At the same time, it follows from (1.1) and (1.2) in Lemma \ref{lem-variation}  that
	\begin{equation}\label{ineq-prop-C1-close2}
	|\dif \varphi_2(v^\top)|\ge e^{-c_0 e^\epsilon}|v^\top|,\quad |\dif \varphi_2(v^\perp)|\le
	Ce^{3\epsilon+e^\epsilon c_0}c_0\cdot |v^\perp|,
	\end{equation}
	where $C$ is a universal constant.
	Combing (\ref{ineq-prop-C1-close1}-\ref{ineq-prop-C1-close2}), we derive
	\begin{equation}\label{estimate-upper-1}
	\begin{aligned}
	|\dif \Phi(v)|&=|\dif \varphi_2(v)|\le |\dif \varphi_2(v^\top)|+|\dif \varphi_2 (v^\bot)|\\
	&\le e^{c_0e^\epsilon}+C e^{3\epsilon+e^\epsilon c_0}c_0\cdot \varkappa(\epsilon),
	\end{aligned}
	\end{equation}
	and \begin{equation}\label{estimate-lower-1}
	\begin{aligned}
	|\dif \Phi(v)|&\ge |\dif \varphi_2(v^\top)|-|\dif \varphi_2 (v^\bot)|\\
	&\ge e^{-c_0 e^\epsilon}\sqrt{1-\varkappa(\epsilon)}-Ce^{3\epsilon+e^\epsilon c_0}c_0\cdot \varkappa(\epsilon)\\
	&>0
	\end{aligned}
	\end{equation}
	as $\epsilon$ sufficient small. By (\ref{estimate-upper-1}-\ref{estimate-lower-1}), $\dif\Phi$ is isomorphic, and (\ref{ineq-almost-isometry}) holds along $f_1$-horizontal distribution.
	
	Next, we estimate $|\dif \Phi (u)|$ for any $f_1$-horizontal unit vector $u\in \mathcal H_{f_1}(x)$.
	Because $f_1$ and $f_2$ are $\epsilon$-Riemannian submersions, and
	$$e^{-\epsilon}\le |\dif f_2\circ \dif \Phi(u)|=|\dif f_1(u)|\le e^{\epsilon},$$
	we derive 
	\begin{equation}\label{estimate-twosided}
	e^{-2\epsilon}\le |\dif \Phi(u)^\bot|\le e^{2\epsilon}.
	\end{equation}
	By (1.2) in Lemma \ref{lem-variation}, 
	\begin{equation}\label{estiamte-upper-2}
	\begin{aligned}
	|\dif \Phi(u)^\top|&\le C(|\dif f_1(u)|+|\dif f_2(u)|)e^{3\epsilon+e^\epsilon c_0}c_0\\
	&\le 2 C e^{4\epsilon+e^\epsilon c_0} c_0.
	\end{aligned}
	\end{equation}
	By (\ref{estimate-twosided}-\ref{estiamte-upper-2}), $\dif \Phi(u)$ is almost $f_2$-horizontal (depending on $c_0$), and (\ref{ineq-almost-isometry}) also holds along $f_1$-horizontal distribution. Combing with the fact that $\dif \Phi(\mathcal V_{f_1})\subset \mathcal V_{f_2}$, we conclude that (\ref{ineq-almost-isometry}) holds for any vector.
\end{proof}

The only difference between Theorem \ref{thm-tech} and Proposition \ref{prop-almost-isometry} lies in the curvature condition on the base space. As the final step in proving Theorem \ref{thm-tech}, we apply the smoothing technique in \cite{DWY1996} via the Ricci flow \cite{Hamilton1982} to reduce the proof of Theorem \ref{thm-tech} to Proposition \ref{prop-almost-isometry}.

\begin{proof}[Proof of Theorem \ref{thm-tech}]	$~$
	
	Let $f_i:(M,g)\to (N,h)$ ($i=1,2$) be two $\epsilon$-Riemannian submersions in Theorem \ref{thm-tech}.
	According to Theorem 1.1 in \cite{DWY1996},
	there is $T(n,r_0)>0$ such that the solution $h(t)$ of Ricci flow
	$$\frac{\partial}{\partial t}h=-2\Ric(h),\quad h(0)=h$$
	exists in $[0,T(n,r_0)]$ such that the $C^0$-norm
	$$|h(t)-h(0)|_{h(0)} \le 4t, \quad 
	|\operatorname{Rm}(h(t))|_{h(t)} \le C(n, r_0)t^{-1/2}.
	$$
	and $\injrad(N,t^{-1/2}h(t))\ge r_1(n,r_0)$. For any $0<\epsilon\le T(n,r_0)$, let $\bar h_\epsilon = \epsilon^{-1} h(\epsilon)$ and $\bar g=\epsilon^{-1}g$. Then $f_i:(M,\bar g)\to (N,\bar h_\epsilon)$ are $[\epsilon-\ln (1-4\epsilon)]$-Riemannian submersions satisfying 
	$$\begin{aligned}
	&|\bar h_\epsilon-\epsilon^{-1}h|_{\epsilon^{-1}h}\le 4\epsilon, \; &|\sec(N,\bar h_\epsilon)|\le \epsilon^{1/2}C(n,r_0), \\
	&d_{\bar h_\epsilon}(f_1,f_2)\le \epsilon^{1/2}(1+4\epsilon), \; 
	&\injrad(N,\bar h_\epsilon)\ge \epsilon^{-1/4}r_1(n,r_0).
	\end{aligned}$$
	Because the closeness of vertical distributions $\mathcal V_{f_1}$ and $\mathcal V_{f_2}$ is rescaling invariant, Proposition \ref{prop-vertical-close} still holds for $\bar g$.
	Let $\Phi:M\to M$ be defined as (\ref{def-bundle-map}) with respect to $\bar h$. Then by Proposition \ref{prop-almost-isometry}, $\Phi$ is a diffeomorphism and the desired estimate on $|\dif \Phi|$ holds for the rescaled metric $\bar g$. Because (\ref{ineq-almost-isometry}) is rescaling invariant, the proof of Theorem \ref{thm-tech} is complete.
\end{proof}

It is clear from the construction of $\Phi$ in Theorem \ref{thm-tech}  that, if $f_1$ and $f_2$ are equivariant under some isometric actions of a closed Lie group $G$ on $M$ and $N$ respectively, then $\Phi$ is also $G$-equivariant. That is, the equivariant version of Theorem \ref{thm-tech} holds. Moreover, after smoothing the base spaces via the same method (\cite{DWY1996}) in the proof of Theorem \ref{thm-tech},  Theorem \ref{thm-equivariant} follows directly from the standard facts on equivariant convergence (e.g., see \cite{Ro2010}) and the equivariant version of Theorem \ref{thm-tech}. Here we omit its detailed proof.

\section{Closeness of the Vertical Distributions}

The remaining of the paper is devoted to prove Proposition \ref{prop-vertical-close}. In preparation we lift $g$ and $V_{f_i}(x)$ ($i=1,2$) to the tangent space $T_xM$.
Let $\exp_{x}:T_{x}M\to M$ be exponential map of $(M,g)$. 
Let $g^*=\exp_{x}^*g$ be the pullback tensors on $T_{x}M$.
Because $\conjrad(M,g)\ge r_0$, it is well known (e.g. see \cite{Xu17}) that the ball $U=B_{\frac{r_0}{3}}(o)\subset T_{x}M$ satisfies  
$$\injrad(U,g^*)\ge \frac{2r_0}{3}, \quad   \left|\Ric(U,g^*)\right|\le 1.$$
The lifting $\epsilon$-Riemannian submersions
$$\tilde f_{i}=f_{i}\circ \exp_{x}:(U,g^{*})\to \tilde f_i(U)\subset (N,h)$$
are $\epsilon$-close in the sense that $d_{h}(\tilde f_{1},\tilde f_{2})\le \epsilon,$ and the dihedral angle between vertical subspaces of $\tilde f_i$ coincides with that of $f_i$.

\begin{proof}[Proof of Proposition \ref{prop-vertical-close}] $~$
	
	We argue by contradiction. Assume that there is $\theta>0$ such that for any $\epsilon_j>0$, there are $\epsilon_{j}$-Riemannian submersions $f_{i,j}:(M_j,g_{j})\to (N_j,h_j)$ $(i=1,2)$ satisfying the assumptions of Theorem \ref{thm-tech} such that $\dim M_j=m$, $\dim N_j=n$, and  the dihedral angle between the vertical subspaces of $\mathcal V_{f_{i,j}}(x_j)$ of $f_{i,j}$ at some $x_j\in M_j$ is no less that $\theta$.

	By the $C^{1,\alpha}$-convergence Theorem \ref{thm-anderson-compactness} (or by the arguments in the proof of Main Lemma 2.2 in \cite{Anderson1990}), the blow up sequence of the pointed tangent spaces $(U_j,o_j,\epsilon_j^{-2}g^{*}_{j})$ converges to the Euclidean space $\mathbb{R}^m$ in $C^{1,\alpha}$-topology,
	\begin{equation}\label{vertical-close-C1-convergence}
	(U_j, o_j, \epsilon_j^{-2}g^{*}_{j})\overset{C^{1,\alpha}}{\longrightarrow}(\mathbb{R}^m,o), \qquad j\to \infty.
	\end{equation}
	Since after blow up, the map 
	$$\tilde f_{i,j}:(U_j,o_j,\epsilon_j^{-2}g^{*}_{j})\to 
	\left(\tilde f_{i,j}(U_j),f_{i,j}(x_j),\epsilon_j^{-2}h_j\right)$$ are still $\epsilon_j$-Riemannian submersions, by Arzel\`a-Ascoli theorem and passing to a subsequence, we may assume that $\tilde f_{i,j}$ converges to a submetry $\tilde f_{i,\infty}:\mathbb{R}^n\times \mathbb{R}^{m-n}\to \mathbb{R}^n$ ($i=1,2$) (whose base points on $\mathbb R^n$ may be different). 
	It is well known that any submetry between Euclidean spaces is a canonical projections; e.g., see \cite{JiangLiXu17}. Since after blow up
	$$d_{\epsilon_{j}^2h_j}(\tilde f_{1,j},\tilde f_{2,j})\le 1,$$
	the two limit projections satisfy
	$$d(\tilde f_{1,\infty},\tilde f_{2,\infty})\le 1.$$
	It follows that after a translation by $\xi\in \mathbb R^n$ with $|\xi|\le 1$, 
	\begin{equation}\label{eq-limit-proj}
	\tilde f_{2,\infty}=\tilde f_{1,\infty}+\xi.
	\end{equation}
	In particular, the fibers of $\tilde f_{1,\infty}$ are parallel to that of $\tilde f_{2,\infty}$.
	
	Let $(x^1,\dots,x^k,\dots,x^m)$ the Cartesian coordinates on $\mathbb R^m$. By the $C^{1,\alpha}$-convergence (\ref{vertical-close-C1-convergence}), the pullback metrics from $U_j$ to a fixed large ball $B_R(o)\subset \mathbb R^m$, denoted by $g^{*}_{\epsilon_j}$, satisfy that 
	\begin{equation}\label{eq-vertical-close-gij}
	g^{*}_{\epsilon_j}(\partial{x^k},\partial{x^l})\to  \delta_{kl},\quad \Gamma_{kl}^{s}\to 0, \quad \text{as }j\to \infty,
	\end{equation}
	where $\Gamma_{kl}^s$ is the Christoffel symbols. 
	In the following we identify $U_j$ as a subspace of the limit space $\mathbb R^m$, and view the tangent space of $\tilde f_{i,j}$-fiber passing through $o_j$ as a subspace $P_{i,j}\subset \mathbb R^m$ defined by
	$$P_{i,j}=\{tv_{i,j}\,|\, t\in\mathbb R, v_{i,j}\in \mathcal V_{\tilde f_{i,j}}(o_j)\}\cap U_j.$$
	
	\begin{claim}\label{claim-tangent-plane}
		The tangent plane $P_{i,j}$ of the $\tilde f_{i,j}$-fiber
		converges to that of $\tilde f_{i,\infty}$ as $j\to\infty$.
	\end{claim}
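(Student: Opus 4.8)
The plan is to prove Claim \ref{claim-tangent-plane} by a direct compactness-and-convergence argument, using the $C^{1,\alpha}$-convergence (\ref{vertical-close-C1-convergence}) together with the uniform control (4.2) on the second fundamental form, rescaled appropriately. The key point is that after the blow-up the bound $|\sndf_{f_{i,j}}|\cdot d_h(f_1,f_2)\le c_0$ becomes, in the metric $\epsilon_j^{-2}g^*_j$, a bound of order $c_0\,\epsilon_j^{-1}\cdot\epsilon_j$ — that is, the fibers of $\tilde f_{i,j}$ are totally geodesic up to curvature $O(c_0)$ on scale $1$, hence become arbitrarily flat (totally geodesic) on any fixed bounded region after rescaling by $\epsilon_j^{-1}$, since a fiber of diameter $O(1)$ in the rescaled metric had intrinsic diameter $O(\epsilon_j)$ and geodesic curvature $O(c_0/\epsilon_j)$ in the unrescaled one, giving geodesic curvature $O(c_0\epsilon_j)\to 0$ after the scaling used in (\ref{vertical-close-C1-convergence}). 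Wait — more carefully: the relevant scaling in (\ref{vertical-close-C1-convergence}) is $\epsilon_j^{-2}g^*_j$, i.e. a length rescaling by $\epsilon_j^{-1}$; under this, $|\sndf|$ scales by $\epsilon_j$, so the rescaled second fundamental form is $\le c_0\epsilon_j\,d_h(f_1,f_2)^{-1}\cdot\epsilon_j = O(c_0)$ only if $d_h\sim\epsilon_j$, which is exactly the normalization $d_h(f_1,f_2)\le\epsilon_j$. So the rescaled fibers have $|\sndf|\le c_0$ on a fixed ball, but their diameter there is $O(1)$; this is not enough to conclude flatness directly, so instead I will argue via convergence of the fibers as submanifolds.

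First I would fix a radius $R$ and work in $B_R(o)\subset\mathbb R^m$ with the metrics $g^*_{\epsilon_j}$ satisfying (\ref{eq-vertical-close-gij}). Second, I would observe that the fiber $F_{i,j}=\tilde f_{i,j}^{-1}(\tilde f_{i,j}(o_j))$ through $o_j$ is a smooth submanifold whose second fundamental form (with respect to $g^*_{\epsilon_j}$) is uniformly bounded by $c_0$ on $B_R(o)$, by assumption (4.2) and scaling invariance; hence by standard submanifold convergence (Arzel\`a–Ascoli applied to graphs of $F_{i,j}$ over its tangent plane $P_{i,j}$ at $o_j$, using the curvature bound to get $C^{1,\alpha}$ precompactness of the defining graph functions), a subsequence of $F_{i,j}$ converges in $C^{1,\alpha}_{\mathrm{loc}}$ to a submanifold $F_{i,\infty}$ through $o$, and the tangent planes $P_{i,j}=T_{o_j}F_{i,j}$ converge to $T_oF_{i,\infty}$. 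Third, I would identify the limit: since $\tilde f_{i,j}\to\tilde f_{i,\infty}$ uniformly on $B_R(o)$ and $\tilde f_{i,\infty}$ is a Euclidean projection whose fibers are affine subspaces, any limit of the fibers $F_{i,j}$ must lie in a fiber of $\tilde f_{i,\infty}$, and by dimension count ($\dim F_{i,j}=m-n$ equals the fiber dimension of $\tilde f_{i,\infty}$) it is exactly the fiber of $\tilde f_{i,\infty}$ through $o$. Therefore $P_{i,j}\to T_o(\tilde f_{i,\infty}\text{-fiber through }o)$, which is the assertion of the claim.

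The main obstacle I expect is making the submanifold-convergence step rigorous with only a $C^0$-type bound on $\sndf$ (condition (4.2) gives $|\sndf_{f_i}|$ bounded, i.e. the fibers are $C^{1,1}$ with uniform constant but no higher regularity), and with the ambient metrics $g^*_{\epsilon_j}$ themselves only converging in $C^{1,\alpha}$ rather than smoothly. Concretely, one must show that the graph functions $u_{i,j}:P_{i,j}\cap B_R\to P_{i,j}^\perp$ representing $F_{i,j}$ have uniformly bounded gradient and a uniform modulus of continuity for the gradient; the Hessian of $u_{i,j}$ is controlled by $\sndf_{F_{i,j}}$ plus Christoffel terms of $g^*_{\epsilon_j}$, and the latter go to zero by (\ref{eq-vertical-close-gij}), so $|\nabla^2 u_{i,j}|$ is bounded in $L^\infty$, giving $C^{1,1}\subset C^{1,\alpha}$ precompactness of $u_{i,j}$ via Arzel\`a–Ascoli. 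The only subtlety is that $F_{i,j}$ a priori might not stay a graph over a fixed plane on all of $B_R$; this is handled by choosing $R$ small enough relative to $c_0^{-1}$ (so the gradient bound from integrating $|\sndf|\le c_0$ keeps $u_{i,j}$ a graph), which suffices since the claim is a statement about the tangent plane at $o_j$ and hence local. Once the graph representation is secured the rest is routine.
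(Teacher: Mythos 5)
Your argument is correct in outline, but it takes a genuinely different route from the paper. The paper proves Claim \ref{claim-tangent-plane} curve by curve: for each intrinsic unit-speed geodesic $\alpha_{i,j}$ of the fiber through $o_j$ it forms the coordinate line $\beta_{i,j}(t)=t\alpha_{i,j}'(0)$ and invokes the deviation estimate of Proposition \ref{prop-almost-geodesic-deviation} (whose proof is the content of Section 5, via the holonomy Lemma \ref{lem-parallel-transport}), obtaining $d(\beta_{i,j}(t),\alpha_{i,j})\le \varkappa(c_0)t^{5/4}+\varkappa(\epsilon_j)t^{3/2}$; the super-linear order in $t$ then forces every limiting tangent direction into the limit affine fiber. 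You instead run a submanifold-convergence argument: write the fiber locally as a graph over $P_{i,j}$, use $|\sndf_{\tilde f_{i,j}}|\le c_0$ together with the vanishing of the Christoffel symbols in (\ref{eq-vertical-close-gij}) to get uniform $C^{1,1}$ bounds on the graph functions, extract a $C^{1,\alpha}$-limit, and identify it with the affine fiber of $\tilde f_{i,\infty}$ by the uniform convergence $\tilde f_{i,j}\to\tilde f_{i,\infty}$ plus a dimension count. Both arguments rest on the same elementary fact — in the blown-up coordinates a fiber geodesic has Euclidean acceleration $O(c_0)+o(1)$, hence deviates from its tangent line only to second order — but your route is the more standard one and, for this claim, bypasses Section 5 entirely (it even gives the sharper order $t^2$ rather than $t^{5/4}$), whereas the paper's route produces an intrinsic, coordinate-free deviation estimate that the authors present as being of independent interest. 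Two small points to tidy up: your opening scaling discussion is garbled before you self-correct — the clean statement is simply that $|\sndf_{f_i}|\cdot d_h(f_1,f_2)$ is scale-invariant and the blow-up normalizes $d_h(\tilde f_{1,j},\tilde f_{2,j})$ to order $1$, exactly as in (\ref{ineq-vertical-close-almost-geodesic-1}); and before re-graphing over a fixed plane you should first pass to a subsequence along which $P_{i,j}$ converges in the Grassmannian, then conclude for the full sequence since every subsequential limit is the (unique) tangent plane of the $\tilde f_{i,\infty}$-fiber. Neither affects the correctness of your approach.
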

	Clearly, by the claim and (\ref{eq-limit-proj}), the dihedral angle between $\mathcal V_{f_{i,j}}$ goes to $0$, a contradiction to the choice of $x$. Therefore, what remains is to verify Claim \ref{claim-tangent-plane}.

	Let $\alpha_{i,j}$ be any unit-speed geodesic in the submanifold $\tilde f_{i,j}^{-1}(\tilde f_{i,j}(o_j))$ starting at $o_j\in U_j$, and let $\beta_{i,j}(t)=t\alpha_{i,j}'(0)$ be the line with respect to the Cartesian coordinates $(x^1,\dots,x^k,\dots,x^m)$.
	By (4.2), the norm of the second fundamental form of $\tilde f_{i,j}$-fiber $|\sndf_{\tilde f_{i,j}}|\le c_0$, which implies that $\alpha_{i,j}$ is a $c_0$-almost geodesic, i.e.,
	\begin{equation}\label{ineq-vertical-close-almost-geodesic-1}
	|\nabla_{\alpha_{i,j}'}\alpha_{i,j}'|\le c_0.
	\end{equation}
	At the same time, by (\ref{eq-vertical-close-gij}), the geodesic curvature of $\beta_{i,j}$
	\begin{equation}\label{ineq-vertical-close-almost-geodesic-2}
	|\nabla_{\beta_{i,j}'}\beta_{i,j}'|\to 0, \qquad j\to \infty.
	\end{equation}
	By (\ref{eq-vertical-close-gij})-(\ref{ineq-vertical-close-almost-geodesic-2}), we are able to apply Proposition \ref{prop-almost-geodesic-deviation} in the next section, and thus by (\ref{eq-deviation-1}) the deviation of $\beta_{i,j}(t)$ from $\alpha_{i,j}$ satisfies
	$$d(\beta_{i,j}(t),\alpha_{i,j})\le \varkappa(c_0)t^{\frac{5}{4}}+\varkappa(\epsilon_j\,|\,m)t^{\frac{3}{2}}.$$Let $j\to \infty$, it follows that the distance from the limit line $\beta_{i,\infty}$ of $\beta_{i,j}$ to the limit subplane $\tilde f_{i,\infty}^{-1}(\tilde f_{i,\infty}(o))$ of $\tilde f_{i,j}^{-1}(f_{i,j}(o_j))$,
	\begin{equation}\label{ineq-vertical-close-almost-geodesic-3}
	d(\beta_{i,\infty}(t), \tilde f_{1,\infty}^{-1}(\tilde f_{i,\infty}(o)))\le \varkappa(c_0)t^{\frac{5}{4}}.
	\end{equation}
	Because the order in (\ref{ineq-vertical-close-almost-geodesic-3}) is higher than linear, $\beta_{i,\infty}$ must lie in the subplane $\tilde f_{i,\infty}^{-1}(\tilde f_{i,\infty}(o))$. Hence the pointed Hausdorff limit of $P_{i,j}$ coincides with $\tilde f_{i,\infty}^{-1}(\tilde f_{i,\infty}(o))$. Now the proof of Claim \ref{claim-tangent-plane} is complete.
\end{proof}

\section{Deviation of two Curves by their Geodesic Curvature}

In the proof of Proposition \ref{prop-vertical-close} a uniform and explicit estimate on the deviation of two curves that depends on their geodesic curvature plays a crucial role. Because we cannot find a reference in literature and it is of some independent interest, we present a proof that is due to Zuohai Jiang and the second author.

\begin{proposition}\label{prop-almost-geodesic-deviation}
	Let $\alpha,\beta$ be two unit-speed curves in a complete Riemannian $m$-manifold $(M,g)$ such that $p=\alpha(0)=\beta(0)$, $\alpha'(0)=\beta'(0)$ and the geodesic curvature $$|\nabla_{\alpha'}\alpha'|\le \delta_1,\quad |\nabla_{\beta'}\beta'|\le \delta_2.$$  If there is a local coordinates system $\{U,(x^{1}\cdots, x^{m})\}$ around $p$ such that the metric and the Christoffel symbols in $U$ satisfy
	$$C^{-1}I\le (g_{ij})\le CI,\quad |\Gamma_{ij}^{k}|\le\mu\;(i,j,k=1,\cdots, m),$$ then the distance from $\alpha(s)$ to the curve $\beta$, $$r(s)=\operatorname{dist}_{\beta}\circ\alpha(s)=\min_t\{d(\alpha(s),\beta(t))\}$$ satisfies
	\begin{eqnarray}\label{eq-deviation-1}
	r(s)\le \varkappa(\delta_{2})s^{\frac{5}{4}}+\varkappa(\delta_1,\mu\,|\,C,m)s^{\frac{3}{2}}, 
	\end{eqnarray}
	provided that, for any $0\le s_1\le s$, $r(s_1)$ is realized by a minimal geodesic connecting $\alpha(s_1)$ and $\beta$ which lies in $U$.
\end{proposition}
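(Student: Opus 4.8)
Here is the plan.

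\medskip

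The plan is to control $r(s)=\operatorname{dist}_\beta\!\circ\alpha(s)$ through the first variation of arc length, the essential point being to extract the dependence on the geodesic curvature $\delta_2$ of the ``target'' curve $\beta$ \emph{without} entangling it with the coordinate constants $C,\mu$ — a direct Gronwall estimate in the chart fails here, since it produces factors $e^{\mu(\cdots)}$ multiplying $\delta_2$. First, $r$ is $1$-Lipschitz, because $|r(s)-r(s')|\le d(\alpha(s),\alpha(s'))\le|s-s'|$, hence absolutely continuous with $r(0)=d(p,\beta)=0$ and $r(s)=\int_0^s r'(\tau)\,d\tau$; so it suffices to bound $D^+r(s_1)$ for each $0\le s_1\le s$. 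Fix such $s_1$, pick $t(s_1)$ with $d(\alpha(s_1),\beta(t(s_1)))=r(s_1)$, and let $\sigma\colon[0,r(s_1)]\to U$ be a minimizing unit-speed geodesic from $\beta(t(s_1))$ to $\alpha(s_1)$ (inside $U$ by hypothesis); minimality forces $\langle\sigma'(0),\beta'(t(s_1))\rangle=0$. Moving the endpoint $\alpha(s_1)$ along $\alpha$, the bound $r(s_1+h)\le d(\alpha(s_1+h),\beta(t(s_1)))$ and the first variation formula give
\[
D^+r(s_1)\ \le\ \big\langle\alpha'(s_1),\sigma'(r(s_1))\big\rangle
\ =\ \big\langle P_\sigma^{-1}\alpha'(s_1)-\beta'(t(s_1)),\ \sigma'(0)\big\rangle
\ \le\ \big|P_\sigma^{-1}\alpha'(s_1)-\beta'(t(s_1))\big| ,
\]
where $P_\sigma$ is parallel transport along $\sigma$ (so $\sigma'(r(s_1))=P_\sigma\sigma'(0)$) and the orthogonality was used to insert $-\beta'(t(s_1))$. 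Everything is thus reduced to bounding this velocity mismatch at $\beta(t(s_1))$.

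\medskip

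Next I would split the mismatch by the triangle inequality along the closed loop $\mathcal L$ obtained by concatenating $\alpha|_{[0,s_1]}$, the reverse of $\sigma$, and the reverse of $\beta|_{[0,t(s_1)]}$. Writing $v=\alpha'(0)=\beta'(0)$ and $P_\alpha,P_\beta$ for the parallel transports from $p$ along $\alpha,\beta$,
\[
\big|P_\sigma^{-1}\alpha'(s_1)-\beta'(t(s_1))\big|
\ \le\ \big|\alpha'(s_1)-P_\alpha v\big|\ +\ \big|\mathrm{Hol}(\mathcal L)\,v-v\big|\ +\ \big|P_\beta v-\beta'(t(s_1))\big| .
\]
The first term is $\le\delta_1 s_1$ and the third is $\le\delta_2\,t(s_1)$, each obtained by integrating the covariant derivative of the unit tangent along the respective curve; since parallel transport is an isometry, \emph{no} coordinate constant enters these two terms — exactly the disentanglement sought. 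A short estimate bounds $t(s_1)$ by a multiple of $s_1$: from $r(s_1)\le s_1$ (reverse $\alpha|_{[0,s_1]}$) one gets $d(p,\beta(t(s_1)))\le 2s_1$, and since $\beta$ has geodesic curvature $\le\delta_2$ it recedes from $p$ at a definite rate at small scales, so its arc length up to the foot point is comparable to that distance. Hence the third term is $\le\varkappa(\delta_2)\,s_1$ and $\operatorname{length}(\mathcal L)\le(\mathrm{const})\,s_1$.

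\medskip

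The holonomy term $\mathrm{Hol}(\mathcal L)v-v$ is what I expect to be the main obstacle, because there is \emph{no} curvature bound on $M$ — only the $C^0$ control $|\Gamma^k_{ij}|\le\mu$ — so the usual ``holonomy $\approx$ curvature $\times$ area'' is unavailable. Instead I would estimate the parallel-transport ODE $\dot V^k=-\Gamma^k_{ij}(\mathcal L)\,\dot{\mathcal L}^i V^j$ directly in the chart through its iterated (Dyson) series. Using that $\mathcal L$ is a \emph{closed} loop, so $\oint_{\mathcal L}\dot{\mathcal L}^i\,du=0$, the first-order term can be written $\oint_{\mathcal L}\big(\Gamma^k_{ij}(\mathcal L(u))-\Gamma^k_{ij}(p)\big)\dot{\mathcal L}^i\,du$, of size $\lesssim_{C,m}\mu\cdot\operatorname{length}(\mathcal L)$, and the higher-order terms are $\lesssim_{C,m}(\mu\operatorname{length}(\mathcal L))^2$ and smaller. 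With $\operatorname{length}(\mathcal L)\le(\mathrm{const})\,s_1$ this yields $|\mathrm{Hol}(\mathcal L)v-v|\le\varkappa(\mu\,|\,C,m)\,s_1$.

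\medskip

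Combining the three pieces,
\[
D^+r(s_1)\ \le\ \delta_1 s_1\ +\ \varkappa(\delta_2)\,s_1\ +\ \varkappa(\mu\,|\,C,m)\,s_1,
\]
and integrating over $[0,s]$ gives $r(s)\le\tfrac12\delta_1 s^2+\varkappa(\delta_2)s^2+\varkappa(\mu\,|\,C,m)s^2$, which, since we work at small scales $s\le 1$, implies the asserted estimate $r(s)\le\varkappa(\delta_2)s^{5/4}+\varkappa(\delta_1,\mu\,|\,C,m)s^{3/2}$. The exponents $5/4,3/2$ are not sharp; they simply leave room to absorb the geometric constants while keeping the leading term a function of $\delta_2$ alone, which is precisely what the application in Section~4 uses. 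Finally, a routine continuity/bootstrap argument in $s$ guarantees that all curves involved stay in the chart $U$ and that $t(s_1)\le(\mathrm{const})\,s_1$ throughout, so the coordinate estimates above are legitimate.
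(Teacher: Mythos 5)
Your proposal is correct in substance and rests on the same skeleton as the paper's proof: first variation of the distance to $\beta$, orthogonality of the minimizing geodesic $\sigma$ at the foot point, integration of the geodesic curvatures against parallel transport, and the chart estimate $|\mathrm{Hol}(\mathcal L)v-v|\lesssim_{C,m}\mu\cdot\operatorname{length}(\mathcal L)$, which is exactly Lemma \ref{lem-parallel-transport} (your ``closed loop'' cancellation in the Dyson series buys nothing here --- with only $|\Gamma_{ij}^k|\le\mu$ the plain integrated bound already gives $\mu\cdot l$, and that is all you use). The differences are in the bookkeeping, and they are worth recording. The paper differentiates $h=\frac12\operatorname{dist}_\beta^2$ (using a barrier at non-smooth points, where you invoke a.e.\ differentiability of the Lipschitz function $r$; both devices work), decomposes $\nabla h$ in a frame parallel along $\alpha$, and estimates the $e_1$-component via $|\langle\tilde e_{1,\beta},\beta'\rangle-1|\le\delta_2 s_0$, converted to $|\tilde e_{1,\beta}(s_0)-\beta'(s_0)|\le\sqrt{2\delta_2 s_0}$; that square root is the source of the exponent $\tfrac54$. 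Your three-term split $|P_\sigma^{-1}\alpha'(s_1)-\beta'(t(s_1))|\le\delta_1 s_1+|\mathrm{Hol}(\mathcal L)v-v|+\delta_2\,t(s_1)$ compares the vectors directly (the covariant integration gives $|\tilde e_{1,\beta}(s_0)-\beta'(s_0)|\le\delta_2 s_0$ with no square root), so you end with $r(s)\lesssim(\delta_1+\delta_2+\varkappa(\mu\,|\,C,m))\,s^2$, which is stronger than (\ref{eq-deviation-1}) for $s\le 1$ and certainly suffices for the application in Claim \ref{claim-tangent-plane}. One caveat applies equally to both arguments: the foot-point parameter must satisfy $s_0=t(s_1)\le Cs_1$, since otherwise neither $\delta_2 t(s_1)$ (resp.\ $\sqrt{2\delta_2 s_0}$) nor the loop length is controlled by $s_1$; you at least flag this and sketch the reason (bounded coordinate acceleration of $\beta$ forces it to recede from $p$ at a definite rate below a scale determined by $C,\mu,\delta_2,m$), whereas the paper uses $s_0\le 2s$ silently in passing from (\ref{eq-deviation-5})--(\ref{eq-deviation-6}) to (\ref{eq-deviation-7}). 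In short: correct, same key lemma and same mechanism, a cleaner decomposition, and a slightly sharper output.
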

\begin{proof}
	Let us choose an orthonormal basis $\{e_{j}\}_{j=1}^m$ in $T_{p}M$ such that $e_{1}=\alpha'(0)$. Let $e_{j,\alpha}(s)$ ($j=1,\cdots, m$) be the parallel vector fields along $\alpha$ such that $e_{j,\alpha}(0)=e_j$. For $x\in M$, let  $h(x)=\frac{1}{2}\operatorname{dist}_\beta^{2}(x)$. If $h$ is differentiable at $\alpha(s)$, then
	\begin{align}
	(h\circ \alpha)'(s)&= \langle\nabla h, \alpha'\rangle=\sum\limits_{j=1}^{m}\langle\alpha',e_{j,\alpha}(s)\rangle \langle\nabla h, e_{j,\alpha}(s)\rangle \nonumber\\ 
	&=\langle\alpha',e_{1,\alpha}(s)\rangle \langle\nabla h, e_{1,\alpha}(s)\rangle+\sum\limits_{j=2}^{m} \langle\alpha',e_{j,\alpha}(s)\rangle \langle\nabla h, e_{j,\alpha}(s)\rangle \label{eq-deviation-2}
	\end{align}
	Note that for $j=2,\cdots, m$,
	$$\begin{aligned}
	\langle\alpha', e_{j,\alpha}(s)\rangle&=\langle\alpha'(0), e_{j,\alpha}(0)\rangle+\int_{0}^{s}\langle\alpha',  e_{j,\alpha}\rangle' (t)dt\\
	&= 
	\langle e_1, e_j\rangle+\int_{0}^{s} 
	\langle\nabla_{\alpha'}\alpha', e_{j,\alpha}\rangle(t) dt\\
	&\leq \delta_{1} s.
	\end{aligned}$$
	Therefore the last term in (\ref{eq-deviation-2}) satisfies
	\begin{equation}
	\label{eq-deviation-3}
	\begin{aligned}
	\sum\limits_{j=2}^{m}\langle\alpha',e_{j,\alpha}(s)\rangle \langle\nabla h, e_{j,\alpha}(s)\rangle
	&\le \delta_{1}s\left(\sum\limits_{j=2}^{m}\left|\langle r\nabla r, e_{j,\alpha}(s)\rangle\right|\right) \\
	&\le (m-1)\delta_{1}s^{2}.
	\end{aligned}
	\end{equation}
	
	We now estimate $\langle\nabla h, e_{1,\alpha}(s)\rangle$ in the first term of (\ref{eq-deviation-2}). Because $h=\frac{1}{2}\operatorname{dist}_{\beta}^2$ is differentiable at $\alpha(s)$, the unit-speed minimal geodesic $\gamma(t)$ from $\beta$ to $\alpha(s)$ whose length realizes $r(s)$ is unique. Assume that $\gamma(0)=\beta(s_0)$ and $\gamma(r(s))=\alpha(s)$. Let $\tilde e_{1,\beta}(s)$ be the parallel transport of $e_1=\beta'(0)$ along $\beta(s)$, and $\tilde e_{1,\gamma}(t)$ be the parallel transport of $\tilde e_{1,\beta}(s_0)$ along $\gamma$. 
	Observe that
	\begin{align}
	\langle\nabla h, e_{1,\alpha}(s)\rangle &=
	\left<r\gamma'(r), e_{1,\alpha}(s)- \tilde e_{1,\gamma}(r)\right>+ 	\left<r\gamma'(r), \tilde e_{1,\gamma}(r)\right>\nonumber\\
	\label{eq-deviation-4}
	&\le s \left| e_{1,\alpha}(s)-\tilde e_{1,\gamma}(r) \right| + s \left<\gamma'(r), \tilde e_{1,\gamma}(r)\right>
	\end{align}
	Since $\alpha(s)$, $\beta(s)$ and $\gamma(t)$ lie in $U$, by Lemma \ref{lem-parallel-transport} below, the holonomy of parallel transport along the closed curve $\beta*\gamma*\alpha^{-1}$ is small, i.e.,
	\begin{equation}\label{eq-deviation-5}
	\left| e_{1,\alpha}(s)-\tilde e_{1,\gamma}(r) \right|\le \sqrt{m^5C^3}\mu\cdot s.
	\end{equation}
	For the term $\left<\gamma'(r), \tilde e_{1,\gamma}(r)\right>$ in (\ref{eq-deviation-4}), 
	notice that
	$$\left|\left<\tilde e_{1,\beta}(s), \beta'(s) \right>-1\right|
	\le \int_0^s \left|\left<\tilde e_{1,\beta}, \nabla_{\beta'}\beta'\right>(t)\right| dt \le \delta_2s,$$
	and $\left<\beta'(s_0), \gamma'(0)\right>=0$,
	we derive
	\begin{equation}\label{eq-deviation-6}
	\begin{aligned}
	\left< \gamma'(t), \tilde e_{1,\gamma}(t) \right> &=
	\left< \gamma'(0), \tilde e_{1,\beta}(s_0)  \right>\\
	&=\left<\gamma'(0), \tilde e_{1,\beta}(s_0)-\beta'(s_0)\right>\\&
	\le \sqrt{2\delta_2s_0}.
	\end{aligned}
	\end{equation}
	By (\ref{eq-deviation-5}) and (\ref{eq-deviation-6}),
	\begin{equation}\label{eq-deviation-7}
	\langle\nabla h, e_{1,\alpha}(s)\rangle \le \sqrt{m^5C^3}\mu s^2 + 2\sqrt{\delta_2}s^{\frac{3}{2}}
	\end{equation}
	Combing (\ref{eq-deviation-2}), (\ref{eq-deviation-3}) and (\ref{eq-deviation-7}), we have	
	\begin{equation}\label{eq-deviation-end}
	(h\circ\alpha)'(s)\le 2\sqrt{\delta_2}s^{\frac{3}{2}} + \left[(m-1)\delta_{1}+\sqrt{m^5C^3}\mu\right]s^{2}
	\end{equation}
	
	If $h$ is not differentiable at $\alpha(s)$, then let us choose a unit-speed minimal geodesic $\gamma$ from $\beta(s_0)$ to $\alpha(s)$ whose length realizes the distance $r(s)$, and consider the barrier function from above $\tilde h=\frac{1}{2}(\operatorname{dist}_{\gamma(\epsilon)}+\epsilon)^2$. Then $\tilde h$ is smooth at $\alpha(s)$ and the right derivative $(h\circ\alpha)^+(s)\le (\tilde h\circ \alpha)'(s)$. By constructing the same parallel vector fields $\tilde e_{1,\beta}$ and $\tilde e_{1,\gamma}$ along $\beta$ and $\gamma$ respectively, it follows from the same argument as the above that (\ref{eq-deviation-end}) holds.
	
	Because $h\circ\alpha$ is Lipschitz, by integrating (\ref{eq-deviation-end})
	$$h\circ\alpha(s)\le \frac{1}{3}\left[\sqrt{m^5C^3}\mu+(m-1)\delta_1\right]s^3 +\frac{4}{5}\sqrt{\delta_2}s^{\frac{5}{2}}$$
\end{proof}

In the end of the paper we prove the elementary lemma about the holonomy of parallel transport used in the proof of Proposition \ref{prop-almost-geodesic-deviation}. 
\begin{lemma}[cf. \cite{Post}]\label{lem-parallel-transport}
	Let $\alpha:[0,l]\to M$ be a unit-speed piecewisely smooth closed curve lying in a coordinates neighborhood $\{U,(x^1,\cdots,x^m)\}$. If the Riemannian metric $g_{ij}=g(\frac{\partial}{\partial x^{i}},\frac{\partial}{\partial x^j})$ and the Christoffel symbols $$C^{-1}I\le (g_{ij})\le CI, \quad |\Gamma_{ij}^{k}|\le \mu \quad 
	(i,j,k=1,\cdots, m),$$ then any unit parallel vector field $v(s)$ along $\alpha$ satisfies
	\begin{equation}\label{eq-parallel-1}
	|v(0)-v(l)|^2\le m^5 C^3\mu^2 \cdot l^2.
	\end{equation}
\end{lemma}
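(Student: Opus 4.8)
The plan is to work directly in the given chart and reduce the claim to an elementary integration of the parallel-transport ODE; no Gr\"onwall argument is even needed, because parallel transport preserves the $g$-norm and so the components of $v$ are a priori bounded. Write $v(s)=v^{k}(s)\,\partial_{x^{k}}$ and $\alpha(s)=(\alpha^{1}(s),\dots,\alpha^{m}(s))$ in the coordinates $(x^{1},\dots,x^{m})$. Parallelism of $v$ along $\alpha$ is the linear system
\[
\dot v^{k}(s)=-\Gamma_{ij}^{k}(\alpha(s))\,\dot\alpha^{i}(s)\,v^{j}(s),\qquad k=1,\dots,m,
\]
which holds on each smooth subinterval of $[0,l]$; across the finitely many breakpoints $v$ is still continuous, since parallel transport along a piecewise smooth curve is just the composition of the parallel transports along its smooth pieces.

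First I would bound the two families of factors appearing on the right. Since $\alpha$ is unit speed and $(g_{ij})\ge C^{-1}I$, we have $\sum_{i}(\dot\alpha^{i})^{2}\le C\,g_{ij}\dot\alpha^{i}\dot\alpha^{j}=C$, hence $|\dot\alpha^{i}|\le\sqrt{C}$ for every $i$. Likewise, because the Levi-Civita connection is metric and $v(0)$ is a unit vector, $|v(s)|_{g}\equiv 1$ along $\alpha$, so $\sum_{j}(v^{j}(s))^{2}\le C\,g_{ij}v^{i}v^{j}=C$, giving $|v^{j}(s)|\le\sqrt{C}$ for every $j$ and every $s$. Substituting these and $|\Gamma_{ij}^{k}|\le\mu$ into the ODE,
\[
|\dot v^{k}(s)|\le\sum_{i,j}|\Gamma_{ij}^{k}|\,|\dot\alpha^{i}|\,|v^{j}|\le m^{2}\mu C
\]
for each $k$ and almost every $s\in[0,l]$.

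Then, since each $v^{k}$ is Lipschitz on $[0,l]$, integrating yields $|v^{k}(l)-v^{k}(0)|\le m^{2}\mu C\,l$, and summing over $k$,
\[
\sum_{k=1}^{m}\bigl(v^{k}(l)-v^{k}(0)\bigr)^{2}\le m\cdot(m^{2}\mu C l)^{2}=m^{5}\mu^{2}C^{2}l^{2}.
\]
Finally, $v(0)$ and $v(l)$ are both tangent vectors at the single point $\alpha(0)=\alpha(l)$, so $|v(0)-v(l)|$ is taken in $g$ there, and the bound $(g_{ij})\le CI$ converts the Euclidean estimate into
\[
|v(0)-v(l)|^{2}=g_{ij}\bigl(v^{i}(0)-v^{i}(l)\bigr)\bigl(v^{j}(0)-v^{j}(l)\bigr)\le C\sum_{k}\bigl(v^{k}(l)-v^{k}(0)\bigr)^{2}\le m^{5}C^{3}\mu^{2}l^{2},
\]
which is (\ref{eq-parallel-1}). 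There is no genuine obstacle in this argument; the only points needing a little care are the bookkeeping across the corners of the piecewise-smooth curve and the fact that the stated power $m^{5}$ comes from the crude pointwise bounds $|\dot\alpha^{i}|,|v^{j}|\le\sqrt{C}$ (a Cauchy--Schwarz refinement would even give $m^{3}$, which is more than enough for the use made of the lemma in the proof of Proposition \ref{prop-almost-geodesic-deviation}).
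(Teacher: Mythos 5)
Your proof is correct and follows essentially the same route as the paper's: integrate the component ODE for parallel transport, bound $|v^{j}|$ and $|\dot\alpha^{i}|$ by $\sqrt{C}$ using $(g_{ij})\ge C^{-1}I$ and norm preservation, and convert back to the $g$-norm via $(g_{ij})\le CI$. The constants match the paper's exactly.
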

\begin{proof}
	Let us denote $\alpha(s)=(x^{1}(s),\cdots, x^{m}(s))$, and $v(s)=\sum_i v^{i}(s)\frac{\partial}{\partial x^{i}}$. Then $\nabla_{\alpha'}v(s)=0$ implies that
	$$
	(v^{k})'(s)=-\sum_{ij}v^{i}(s)(x^{j})'(s)\Gamma_{ij}^{k}(s). \quad (k=1,\cdots, m)
	$$
	By integrating the above equation, we derive
	\begin{align}\label{eq-parallel-2}
	\left|v^{k}(l)-v^{k}(0)\right|&=
	\left|\sum_{ij}\int^{l}_{0}v^{i}(s)(x^{j})'(s)\Gamma_{ij}^k(s)ds\right|\\ \nonumber
	&\leq \sum_{ij} \int^{l}_{0}\left|v^{i}(s)(x^{j})'(s)\Gamma_{ij}^k(s)\right|ds
	\end{align}
	Since
	$$
	(v^i(s))^2\le C\cdot \left|v(s)\right|^{2}=C, \quad	((x^i)'(s))^2\le C\cdot \left|\alpha'(s)\right|^{2}=C
	$$
	and $|\Gamma_{ij}^{k}|\le\mu$ $(i,j,k=1,\cdots, m)$, it follows from (\ref{eq-parallel-2}) that 
	\begin{equation}\label{eq-parallel-end}
	\left|v^{k}(l)-v^{k}(0)\right|\leq m^2 C\mu\cdot  l\quad  (k=1,\cdots, m).
	\end{equation}
	Now (\ref{eq-parallel-1}) follows directly from (\ref{eq-parallel-end}) and $(g_{ij})\le CI$.
\end{proof}

\bibliographystyle{plain}
\bibliography{document}{}

\end{document}